\documentclass{article}
\usepackage{graphicx} 
\usepackage{amssymb}
\usepackage{authblk}
\usepackage{amscd}
\usepackage{caption}
\usepackage{blkarray}
\usepackage{mathrsfs}
\usepackage{xcolor}
\usepackage{bm}
\usepackage[centertags]{amsmath}
\usepackage{amssymb}
\usepackage[english]{babel}
\usepackage{blindtext}
\usepackage{tikz}
\usepackage{amsthm}
\usepackage{graphicx}
\usepackage{booktabs}   
\usepackage{multirow}   
\usepackage{siunitx}    

\usepackage{gnuplottex}
\usepackage{epsfig}
\usepackage[belowskip=-15pt,aboveskip=0pt]{caption}
\usepackage{amsthm}
\usepackage{amssymb,latexsym}
\usepackage{amsfonts,amsmath}
\usepackage[T1]{fontenc}
\usepackage{amsmath,amssymb}
\usepackage{latexsym}
\numberwithin{equation}{section}
\usepackage{amssymb}
\usepackage{amsfonts}
\usepackage[arrow,frame,matrix]{xy}

\setlength{\arrayrulewidth}{0.4mm}
\setlength{\tabcolsep}{20pt}

\usepackage{fullpage}
\usepackage{hyperref}
\usepackage[margin=2.6cm]{geometry}
\usepackage{placeins}
\newtheorem{definition}{Definition}
\usepackage{grffile}
\usetikzlibrary{decorations.pathreplacing}
\usepackage{color}
\usepackage{float}
\usepackage{subcaption}
\usepackage{array}   
\usepackage{cancel}
\usepackage{ragged2e}
\usepackage[strings]{underscore}
\usepackage{parskip}
\usepackage{float}
\usepackage{placeins}
\usepackage{enumitem} 
\newtheorem{theorem}{Theorem}
\newtheorem{remark}{Remark}
\newtheorem{conjecture}{Conjecture}

\title{Geometric means of HPD GLT matrix-sequences: a maximal result beyond invertibility assumptions on the GLT symbols}
\author[1]{Asim Ilyas}
\author[1]{Muhammad Faisal Khan}
\author[1]{Valerio Loi}
\author[1,2]{S. Serra-Capizzano}
\affil[1]{Department of Science and High Technology, University of Insubria, Italy; email: ailyas1@uninsubria.it; mfkhan@uninsubria.it; vloi@uninsubria.it s.serracapizzano@uninsubria.it; }
\affil[2]{Department of Information Technology, University of Uppsala, Sweden; email: stefano.serra@it.uu.se}
\date{} 

\begin{document}

\maketitle

\begin{abstract}
    In the current work, we consider the study of the spectral distribution of the geometric mean matrix-sequence of two matrix-sequences 
$\{G(A_n, B_n)\}_n$   formed by Hermitian Positive Definite (HPD) matrices,  
     assuming that the two input matrix-sequences $\{A_n\}_n, \{B_n\}_n$ belong to the same $d$-level $r$-block Generalized Locally Toeplitz (GLT) $\ast$-algebra with $d,r\ge 1$ and with GLT symbols $\kappa, \xi$. Building on recent results in the literature,
we examine whether the assumption that at least one of the input GLT symbols is invertible almost everywhere (a.e.) is necessary. Since inversion is mainly required due to the non-commutativity of the matrix product, it was conjectured that the hypothesis on the invertibility of the GLT symbols can be removed.
In fact, we prove the conjectured statement that is 
\[
\{G(A_n, B_n)\}_n \sim_{\mathrm{GLT}} (\kappa \xi)^{1/2}
\]
when the symbols $\kappa, \xi$ commute, which implies the important case where $r=1$ and $d \geq 1 $, while the statement is generally false or even not well posed when the symbols are not invertible a.e. and do not commute. 
In fact, numerical experiments are conducted in the case where the two symbols do not commute, showing that the main results of the present work are maximal. Further numerical experiments, visualizations, and conclusions end the present contribution.

\end{abstract}

\section{Introduction}\label{intro}

The concept of the matrix geometric mean has attracted considerable attention from scholars in recent decades due to its elegant theoretical foundations and growing significance across numerous mathematical, engineering, and applied sciences; see e.g. \cite{batchelor2005rigorous,yang2010geometry,lapuyade2008radar,yger2017riemannian,Rathi2007,moakher2006averaging,fasi2018computing} and references therein. Initially introduced implicitly in the context of a functional calculus for sesquilinear maps by Pusz and Woronowicz \cite{pusz1975functional}, but popularized as a mean by Kubo and Ando \cite{kubo1980means}, the matrix geometric mean was rigorously formalized as follows: for two positive definite matrices $A$ and $B$ the geometric mean $G(A,B)$ is defined as $G(A,B) = A^{\frac{1}{2}} (A^{-\frac{1}{2}} B A^{-\frac{1}{2}})^{\frac{1}{2}} A^{\frac{1}{2}}=G(B,A)$ The rigorous definition has been analyzed by Ando, Li, and Mathias (ALM) \cite{ando2004geometric}, who also identified essential axiomatic properties \cite[ Section 3]{Bini2024} that a proper matrix geometric mean should satisfy. These axioms serve as a foundation for several established definitions, including the ALM mean itself, the Nakamura-Bini-Meini-Poloni (NBMP) mean \cite{Nakamura2009}, and the Karcher mean, recognized as a matrix geometric mean through Riemannian geometry as introduced in \cite{moakher2005differential,bini2013computing}. Further detailed historical developments and generalizations can be found in \cite{Bhatia2007}.\\
Matrix geometric means have found wide-ranging applications including Diffusion Tensor Imaging (DTI) \cite{batchelor2005rigorous}, radar detection \cite{yang2010geometry,lapuyade2008radar}, image processing \cite{Rathi2007}, elasticity \cite{moakher2006averaging}, machine learning \cite{iannazzo2019derivative}, brain-computer interfaces \cite{yger2017riemannian}, and network analysis \cite{fasi2018computing}. Its relevance becomes particularly significant in applications involving structured matrix-sequences arising from discretizations of differential operators, where hidden asymptotic structures are often revealed through spectral analysis. Such structured sequences frequently belong to the class of Locally Toeplitz (LT) sequences, thereby closely connecting the study of geometric means with the asymptotic spectral properties described by the theory of LT sequences.\\
The theory of LT sequences provides a powerful framework for analyzing the asymptotic singular value and eigenvalue distributions of structured matrix-sequences that arise in numerical analysis. In the context of partial differential equation (PDE) discretization, the matrices resulting from numerical approximations often fail to retain the classical Toeplitz structure due to the presence of variable coefficients. Traditionally, Toeplitz matrices have played a central role in problems involving translation-invariant differential operators, where constant coefficients ensure a globally repeating structure. Instead, the resulting matrices exhibit a Toeplitz-like structure that varies smoothly along the diagonals as the matrix size increases, leading to the broader class of LT sequences \cite{tilli1998locally, tyrtyshnikov1996unifying}.\\
A classical Toeplitz matrix $T_n$ of size $n \times n$ is generated by a function $f \in L^1([-\pi, \pi]^d)$, where the constant diagonal entries correspond to the Fourier coefficients of $f$. However, for differential operators with non-constant coefficients, the resulting matrices do not maintain this global Toeplitz structure but instead belong to the LT class. Specifically, an LT matrix $A_{\bm{n}}$ is obtained by modulating the entries of a Toeplitz matrix using an almost everywhere (a.e.) continuous function $a(x): [0,1]^d \to \mathbb{C}$, scaling the entries along the same diagonal
$ a\left(\frac{i}{n}\right)$ for $i= 1,\dots,n.$ As $n \to \infty$, the differences between consecutive diagonal elements vanish due to continuity, making the structure asymptotically Toeplitz-like. More precisely, this structure can be represented as the product of a Toeplitz matrix and a diagonal matrix, where the diagonal entries correspond to evaluations of the function $a(x)$, such that $A_{n} = D_{n}(a)T_{n},$
where $D_{n}(a)$ is a diagonal matrix whose entries are given by $D_{n}(a) = \mathrm{diag}\left(a\left(\frac{i}{n}\right)\right)_{i=1}^{n}.$
This formulation allows LT sequences to accurately capture local variations in variable-coefficient differential operators while preserving the structural properties of Toeplitz matrices. Formally, an LT sequence $\{A_{n}\}_{n}$ can be expressed as the tensor product of the weight function $a(x)$ and the generating function $f(\theta)$, leading to the LT symbol $
a \otimes f: [0,1]^d \times [-\pi, \pi]^d \to \mathbb{C}$.\\
Building on this concept, a Generalized Locally Toeplitz (GLT) matrix sequence is defined as the limit, in the sense of the approximating class of sequences (a.c.s.), as described in Definition \ref{sec:acs}), of a finite sum of LT sequences. The construction of GLT sequences was formalized through a.c.s. limits in \cite{serra2003generalized} and refined to a \textbf{d-level $*$-algebra of structured matrix sequences} satisfying specific axioms, listed in \cite[Chapter~6, pp.~118--120]{garoni2018}. The theory of GLT sequences provides a fundamental framework for analyzing structured matrix-sequences arising from discretizations of differential equations. To each GLT matrix-sequence $\{A_n\}_n$, there corresponds a measurable function $\kappa:[0,1]^d \times [-\pi,\pi]^d \to \mathbb{C}$, known as the GLT symbol. This symbol characterizes the asymptotic spectral behavior of the sequence and is formally expressed as $\{A_n\}_n \sim_{\mathrm{GLT}} \kappa.$ A fundamental property of the GLT symbol is its uniqueness, stated formally as follows: If $\{A_n\}_n \sim_{\mathrm{GLT}} \kappa$ and $\{B_n\}_n \sim_{\mathrm{GLT}} \xi$, then $
\kappa=\xi,$ \text{almost every (a.e.)}. The GLT symbol fully characterizes the asymptotic spectral distribution of singular values $\{A_n\}_n \sim_{\sigma} \kappa$. Furthermore, if the matrices are Hermitian, the symbol is almost everywhere real-valued and describes the eigenvalue distribution $\{A_n\}_n \sim_{\lambda} \kappa$ precisely. \\
Building upon the foundational concept of GLT symbols, a recent refinement known as Toeplitz/GLT momentary symbols has been introduced in ~\cite{bolten2022toeplitz, bolten2023note} to further enhance the spectral approximation of structured matrices. While classical GLT theory primarily focuses on the asymptotic regime and disregards small norm and low-rank perturbations, momentary symbols explicitly retain small norm contributions, leading to a more accurate characterization of spectral distributions even for moderate matrix dimensions. Formally, the momentary symbol addresses structured matrix-sequences of the form $
\{X_n\}_n = \{T_n(f)\}_n + \{N_n\}_n + \{R_n\}_n,
$ where $T_n(f)$ is a Toeplitz matrix generated by a function $f$, $\{N_n\}_n$ represents a small-norm perturbation sequence, and $\{R_n\}_n$ denotes a low-rank matrix sequence. In traditional GLT theory, an admissible small norm sequence $\{N_n\}_n$ must take the specific form $g(n)T_n(f)$, where $g(n) \to 0$ as $n \to \infty$. In contrast, the framework of momentary symbols admits more general choices for the perturbation $g(n)$, not necessarily tied to such strict asymptotic constraints. Although low-rank contributions $\{R_n\}_n$ are still disregarded in constructing momentary symbols due to their negligible influence on the asymptotic spectral distribution, retaining small norm perturbations significantly enhances spectral approximations for finite-size matrices. \\
An initial spectral study on the geometric mean of structured matrix-sequences was carried out in \cite{ahmad2025matrix}, where the spectral properties of HPD GLT matrix-sequences were analyzed in the context of geometric means. The study first established that for two HPD $d$-level GLT matrix-sequences $\{A_n\}_n \sim_{\mathrm{GLT}}  \kappa$ and $\{B_n\}_n \sim_{\mathrm{GLT}} \xi$, their geometric mean also forms a GLT sequence with the symbol
$
\{G(A_n, B_n)\}_n \sim_{\mathrm{GLT}} (\kappa \xi)^{1/2},
$
as rigorously proven in \cite[Theorem 3, Theorem 4]{ahmad2025matrix}, under the assumption that either $\kappa$ or $\xi$ are nonzero almost everywhere. This result was  generalized to $d$-level $r$-block GLT sequences in \cite[Theorem 5]{ahmad2025matrix}, extending its applicability to the case where the symbols $\kappa$ and $\xi$ do not necessarily commute. In that case we have again 
$
\{G(A_n, B_n)\}_n \sim_{\mathrm{GLT}} G(\kappa, \xi),
$
again under the assumption either $\kappa$ or $\xi$ are invertible almost everywhere.

\subsection*{Main results}

Building on our previous results \cite{ahmad2025matrix}, this work further investigates the necessity of the assumption that at least one of the input GLT symbols is invertible almost everywhere. This assumption was initially introduced to ensure that the inverse of a GLT matrix-sequence remains within the GLT framework. However, since inversion is primarily required due to the non-commutativity of matrices, we explore whether this condition can be relaxed in cases where the symbols commute, in the context of $d$-level $r$-block GLT matrix-sequences, $d, r \geq 1$. 
We prove formally that the invertibility assumption on the GLT symbols can be removed when the GLT symbols commute, which includes the case where $r=1$ and $ dge 1$. In the general setting, we perform extensive numerical experiments. The numerical tests suggest that the requirement of invertibility is essential, showing that our results are maximal when $r>1$.
Indeed, for $r>1$ we report and discuss several cases in which $G(A_n, B_n)$ is well defined for any $n$, $\{A_n\}_n \sim_{\mathrm{GLT}} \kappa$,
$\{B_n\}_n \sim_{\mathrm{GLT}} \xi$, the GLT symbols $\kappa,\xi$ do not commute, and are both zero in a set of positive measure. The resulting observation is that
\[
\{G(A_n, B_n)\}_n \sim_{\mathrm{GLT}} \psi,
\]
but either $\psi$ does not coincide with $G(\kappa, \xi)$ or even $G(\kappa, \xi)$ is not well defined.

Further numerical experiments are also conducted to investigate finer spectral aspects. More precisely, we study the relationships between the order to zeros of the geomeric mean of the symbols minus its minimum and the convergence speed to this minimul of the minimal eigenvalue of the geometric meand of the corresponding GLT matrix-sequences, mimicking what is known in Toeplitz \cite{SerraCapizzano1996,SerraCapizzano1998,Bottcher1998,SerraCapizzano1999a,SerraCapizzano1999b,SerraCapizzanoTilli1999} and GLT setting \cite{extr2-glt,extr2-glt}, so opening new directions for generalizing the existing GLT theory. Notably, in additional numerical tests e.g. when the product $\kappa \xi$ is equal to zero, the eigenvalue distribution of the geometric mean obtained numerically seems to possess a richer structure. More specifically, the experiments show that the resulting asymptotic spectra can be described by exploiting the theory of GLT momentary symbols, according to \cite{bolten2022toeplitz,bolten2023note,new momentary}.\\

\subsubsection*{Structure of the work}

The present study is structured as follows. In Section \ref{sec:spectral}, we introduce notations, terminology, and preliminary results related to Toeplitz and GLT structures, essential for the mathematical formulation and technical solution of the problem. In Section \ref{sec GM}, we present the main results where we drop the assumption that at least one GLT symbol is invertible when the symbols commute, which includes the case of $d\geq 1,r=1$. 
In Section \ref{Num_Exp}, we present numerical experiments illustrating the asymptotic spectral behavior of the geometric mean for GLT matrix-sequences in both 1D and 2D cases, considering both scalar and block structures. 
In particular we give evidence of the extremal spectral behavior, of the emergence of GLT momentary symbols phenomena, and more importantly we give evidence that the results of Section \ref{sec GM} are maximal, since the case of noncommuting GLT symbols ($r>1$) which are both noninvertible leads to noncanonical distribution results. 
Finally, in Section \ref{sec concl}, we draw conclusions and highlight several problems for future research.

\section{Spectral tools}\label{sec:spectral}

In this section, we introduce the essential tools for the spectral analysis of the matrices under consideration, using the multi-level block GLT matrix sequence framework. The case of scalar values, corresponding to the non-block setting, has been extensively detailed in \cite{garoni2017,garoni2018}, while the block setting, associated with matrix-valued symbols, is thoroughly examined in \cite{Barbarino2020a,Barbarino2020b}. In our specific context, where the block size is $r=1$ and the problem domain has a dimensionality of $d=2$, we operate within the realm of two-level non-block GLT sequences.
\subsection{Notation and terminology}
\textbf{Matrices and matrix-sequences}. Given a square matrix $A \in \mathbb{C}^{m \times m}$, we denote by $A^*$ its conjugate transpose and by $A^\dagger$ the Moore–Penrose pseudoinverse of $A$. Recall that $A^\dagger = A^{-1}$ whenever $A$ is invertible. The singular values and eigenvalues of $A$ are denoted respectively by $ \sigma_1(A), \cdots, \sigma_m(A) $ and $\lambda_1(A), \cdots, \lambda_m(A) $.\\
Regarding matrix norms, $\|\cdot\|$ refers to the spectral norm, and for $1 \leq p \leq \infty$, the notation $\|\cdot\|_p$ stands for the Schatten $p$-norm defined as the $p$-norm of the vector of singular values. Note that the Schatten $\infty$-norm, which is equal to the largest singular value, coincides with the spectral norm $\|\cdot\|$; the Schatten $1$-norm since it is the sum of the singular values is often referred to as the trace-norm; and the Schatten $2$-norm coincides with the Frobenius norm. Schatten $p$-norms, as important special cases of unitarily invariant norms, are treated in detail in a wonderful book by Bhatia \cite{Bhatia1997}.\\
Finally, the expression \textbf{matrix-sequence} refers to any sequence of the form $\{A_n\}_n$, where $A_n$ is a square matrix of size $d_n$ with $d_n$ strictly increasing so that $d_n \to \infty$ as $n \to \infty$. A \textbf{r-block matrix-sequence}, or simply a matrix-sequence if $r$ can be deduced from context is a special $\{A_n\}_n$ in which the size of $A_n$ is $d_n = r\varphi_n$, with $r \geq 1 \in \mathbb{N}$ fixed and $\varphi_n \in \mathbb{N}$ strictly increasing.
\subsection{Multi-index notation}
To effectively deal with multilevel structures, it is necessary to use multi-indices, which are vectors of the form $\textbf{i} = (i_1, \cdots, i_d) \in \mathbb{Z}^d$. The related notation is listed below
\begin{itemize}
    \item $\bm{0}, \bm{1}, \bm{2}, \dots$ are vectors of all zeroes, ones, twos, etc.
    \item $\textbf{h} \leq \textbf{k}$ means that $h_r \leq k_r$ for all $r = 1, \cdots, d$. In general, relations between multi-indices are evaluated componentwise.
    \item Operations between multi-indices, such as addition, subtraction, multiplication, and division, are also performed componentwise.
    \item The multi-index interval $[\textbf{h}, \textbf{k}]$ is the set $\{\textbf{j} \in \mathbb{Z}^d : \textbf{h} \leq \textbf{j} \leq \textbf{k}\}$. We always assume that the elements in an interval $[\textbf{h}, \textbf{k}]$ are ordered in the standard lexicographic manner
  \[
\Biggr[ \cdots
\biggr[\Bigr[\bigl(j_1, \cdots, j_d\bigl)\Bigr]
_{j_d = h_d, \cdots, k_d}\biggr]
_{j_{d-1} = h_{d-1}, \cdots, k_{d-1}}
\cdots \Biggr]_{j_1 = h_1, \cdots, k_1}.
\]
    \item $\textbf{j} = \textbf{h}, \cdots, \textbf{k}$ means that $\textbf{j}$ varies from $\textbf{h}$ to $\textbf{k}$, always following the lexicographic ordering.
    \item $\textbf{m} \to \infty$ means that $\min(\textbf{m}) = \min_{j=1, \cdots, d} m_j \to \infty$.
    \item The product of all the components of $\textbf{m}$ is denoted as $\nu(\textbf{m}) := \prod_{j=1}^{d} m_j$.
\end{itemize}
A \textbf{multilevel matrix-sequence} is a matrix-sequence $\{A_{\textbf{n}} \}_n$ such that $n$ varies in some infinite subset of $\mathbb{N}$, $\textbf{n} = \textbf{n}(n)$ is a multi-index in $\mathbb{N}^d$ depending on $n$, and $\textbf{n} \to \infty$ when $n \to \infty$. This is typical of many approximations of differential operators in $d$ dimensions.\\
\textbf{Measurability.} All the terminology from measure theory, such as “measurable set”, “measurable function”, “a.e.”, etc., refers to the Lebesgue measure in $ \mathbb{R}^t $, denoted with $\mu_t $. A matrix-valued function $f : D \subseteq \mathbb{R}^t \to \mathbb{C}^{r \times r} $ is said to be measurable (resp., continuous, 
Riemann-integrable, in $ L^p(D) $, etc.) if all its components $ f_{\alpha\beta} : D \to \mathbb{C} $,  $ \alpha, \beta = 1, \dots, r $, are measurable (resp., continuous, Riemann-integrable, in $ L^p(D) $, etc.). If $ f_m, f : D \subseteq \mathbb{R}^t \to \mathbb{C}^{r \times r} $ are measurable, we say that $ f_m $ converges to $ f $ in measure (resp., a.e., in $ L^p(D) $, etc.) if $ (f_m)_{\alpha\beta} $ converges to 
$ f_{\alpha\beta} $ in measure (resp., a.e., in $ L^p(D) $, etc.) for all $ \alpha, \beta = 1, \cdots, r $. If $A \in \mathbb{C}^{n \times n}$, the singular values and eigenvalues of $A$ are denoted by $\sigma_1(A), \dots, \sigma_n(A)$ and $\lambda_1(A), \dots, \lambda_n(A)$, respectively. The set of eigenvalues (i.e., the spectrum) of $A$ is denoted by $\Lambda(A)$.
\subsection{ Singular Value and Eigenvalue Distributions of a Matrix-Sequence}
\begin{definition}\label{def:sing and eig}
Let $\{A_n\}_n$ be a matrix sequence, where $A_n$ is of size $d_n$, and let $\psi: D \subset \mathbb{R}^t \to \mathbb{C}^{r \times r}$ be a measurable function defined on a set $D$ with $0 < \mu_t(D) < \infty$.
\begin{itemize}
    \item We say that $\{A_n\}_n$ has an \textbf{(asymptotic) singular value distribution} described by $\psi$, written as $\{A_n\}_n \sim_{\sigma} \psi$, if
    \begin{equation*}
        \lim_{n \to \infty} \frac{1}{d_n} \sum_{i=1}^{d_n} F(\sigma_i(A_n)) = \frac{1}{\mu_t(D)} \int_D \displaystyle\frac{\sum_{i=1}^{r} F(\sigma_i(\psi(\textbf{x})))}{r} d\textbf{x}, \quad \forall F \in C_c(\mathbb{R}).
    \end{equation*}
    \item We say that $\{A_n\}_n$ has an \textbf{(asymptotic) spectral (eigenvalue) distribution} described by $\psi$, written as $\{A_n\}_n \sim_{\lambda} \psi$, if
    \begin{equation*}
        \lim_{n \to \infty} \frac{1}{d_n} \sum_{i=1}^{d_n} F(\lambda_i(A_n)) = \frac{1}{\mu_t(D)} \int_D \displaystyle\frac{\sum_{i=1}^{r} F(\lambda_i(\psi(\textbf{x})))}{r} d\textbf{x}, \quad \forall F \in C_c(\mathbb{C}).
    \end{equation*}
\end{itemize}
If $\psi$ describes both singular value and eigenvalue distribution of $\{A_n\}_n$, we write $\{A_n\}_n \sim_{\sigma, \lambda} \psi$.\\
 In this case, the function $\psi$ is referred to as the \textit{eigenvalue (or spectral) symbol} of $\{A_n\}_n$. The same definition when the considered matrix-sequence shows a multilevel structure. In that case, $n$ is replaced by $\bm{n}$ uniformly in $A_n$ and $d_n$.
\end{definition}
The informal meaning behind the spectral distribution definition is the following: if $\psi$
 is continuous, then a suitable ordering of the eigenvalues $\{\lambda_j(A_n)\}_{j=1,\cdots,d_n}$,
 assigned in correspondence with an equispaced grid on $D$, reconstructs approximately the $r$ surfaces $\textbf{x} \to \lambda_i(\psi(\textbf{x})),  i =1,...,r.$ For instance, in the simplest case where $t =1$ and $D =[a,b]$, 
$d_n = nr$, the eigenvalues of $A_n$ are approximately equal - up to few potential outliers to  $\lambda_i (\psi(x_j)),$ where
 \begin{equation*}
  x_j = a + j \frac{(b-a)}{n}, \quad j =1,\cdots,n, \quad i =1,\cdots,r.
 \end{equation*}
If $t =2$ and $D = [a_1,b_1] \times [a_2,b_2]$, 
$d_n = n^2r$, the eigenvalues of $A_n$ are approximately equal, again up to a few potential outliers, to $\lambda_i (\psi(x_{j_1},y_{j_2})), i =1,\cdots,r,$ where
 \begin{align*}
 x_{j_1} &= a_1 + j_1 \frac{(b_1 - a_1)}{n}, \qquad j_1 =1,\cdots,n,\\
 y_{j_2} &= a_2 + j_2 \frac{(b_2 - a_2)}{n},\qquad j_2 =1,\cdots,n,
 \end{align*} 
If the considered structure is two-level, then the subscript is $\bm{n}=(n_1,n_2)$ and $d_n = n_1 n_2 r$. Furthermore, for $t \geq 3$, a similar reasoning applies.\\
Finally, we report an observation that is useful in the following derivations.
\begin{remark}\label{rem: range}
The relation $\{A_n\}_n \sim_\lambda f$ and $\Lambda(A_n) \subseteq S$ for all $n$ imply that the range of $f$ is a subset of the closure $\bar{S}$ of 
$S$. In particular, $\{A_n\}_n \sim_\lambda f$ and $A_n$ positive definite for all $n$ imply that $f$ is non-negative definite almost everywhere, simply nonnegative almost everywhere if $r=1$. The same applies when a multilevel matrix sequence $\{A_{\bm{n}}\}_{\bm{n}}$ is considered and similar statements hold when singular values are taken into account.
\end{remark}
\subsection{Approximating Classes of Sequences}\label{sec:acs}
In this subsection, we present the notion of the approximating class of sequences and a related key result.
\begin{definition}\label{def:acs}
\textbf{(Approximating class of sequences)}  
Let $\{A_n\}_n$ be a matrix-sequence and let $\{\{B_{n,j}\}_n\}_j$ be a class of matrix-sequences, with $A_n$ and $B_{n,j}$ of size $d_n$.  
We say that $\{\{B_{n,j}\}_n\}_j$ is an approximating class of sequences (a.c.s.) for $\{A_n\}_n$ if the following condition is met:  
for every $j$, there exists $n_j$ such that, for every $n \geq n_j$,
\begin{equation*}
    A_n = B_{n,j} + R_{n,j} + N_{n,j},
\end{equation*}
where
\begin{equation*}
    \text{rank } R_{n,j} \leq c(j) d_n, \quad \text{and} \quad \|N_{n,j}\| \leq \omega(j),
\end{equation*}
where $n_j$, $c(j)$, and $\omega(j)$ depend only on $j$ and
\begin{equation*}
    \lim_{j \to \infty} c(j) = \lim_{j \to \infty} \omega(j) = 0.
\end{equation*}
$
\{ \{ B_{n,j} \}_n \}_j \xrightarrow{\text{a.c.s. wrt } j} \{ A_n \}_n
$ denotes that $ \{ \{ B_{n,j} \}_n \}_j $ is an a.c.s. for $ \{ A_n \}_n $.
\end{definition}
The following theorem represents the expression of a related convergence theory and it is a powerful tool used, for example, in the construction of the GLT $*$-algebra.
\begin{theorem}\label{th:fundamental acs}
Let $\{A_n\}_n, \{B_{n,j}\}_n$, with $j,n \in \mathbb{N}$, be matrix-sequences and let $\psi, \psi_j : D \subset \mathbb{R}^d \to \mathbb{C}$ be measurable functions defined on a set $D$ with positive and finite Lebesgue measure. Suppose that
\begin{enumerate}
    \item $\{B_{n,j}\}_n \sim_{\sigma} \psi_j$ for every $j$;
    \item $\{\{B_{n,j}\}_n\}_j \xrightarrow{\text{a.c.s. wrt } j} \{A_n\}_n$;
    \item $\psi_j \to \psi$ in measure.
\end{enumerate}
Then
\begin{equation*}
    \{A_n\}_n \sim_{\sigma} \psi.
\end{equation*}
Moreover, if all the involved matrices are Hermitian, the first assumption is replaced by $ \{B_{n,j}\}_n \sim_{\lambda} \psi_j \; \text{for every } j,$ and the other two are left unchanged, then $\{A_n\}_n \sim_{\lambda} \psi.$
\end{theorem}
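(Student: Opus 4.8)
\emph{Proof idea (proposal).} The plan is to establish the $\sim_\sigma$ statement through a three–term estimate in which the approximating sequences $\{B_{n,j}\}_n$ are interposed for a well–chosen index $j$, and then to read off the Hermitian $\sim_\lambda$ statement by rerunning the same argument with eigenvalues in place of singular values. I would fix $F\in C_c(\mathbb{R})$ (resp. $F\in C_c(\mathbb{C})$ in the Hermitian case), reduce to real–valued $F$ by splitting into real and imaginary parts, and record that such an $F$ is bounded and uniformly continuous, with a modulus of continuity $\omega_F$ satisfying $\omega_F(\delta)\to 0$ as $\delta\to 0^+$. For $n\ge n_j$ I would invoke the a.c.s. splitting $A_n=B_{n,j}+R_{n,j}+N_{n,j}$ with $\operatorname{rank}R_{n,j}\le c(j)\,d_n$ and $\|N_{n,j}\|\le\omega(j)$, and decompose $\bigl|\tfrac1{d_n}\sum_i F(\sigma_i(A_n))-\tfrac1{\mu_d(D)}\int_D F(|\psi|)\bigr|$ into (a) the error from passing from $A_n$ to $B_{n,j}$, (b) the convergence $\{B_{n,j}\}_n\sim_\sigma\psi_j$ of hypothesis 1, and (c) the convergence of $\tfrac1{\mu_d(D)}\int_D F(|\psi_j|)$ to $\tfrac1{\mu_d(D)}\int_D F(|\psi|)$.

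The heart of the matter is (a), which must be bounded uniformly in $n$ and forced to $0$ by the choice of $j$ alone. I would first treat $N_{n,j}$: Weyl's perturbation inequality for singular values gives $|\sigma_i(B_{n,j}+N_{n,j})-\sigma_i(B_{n,j})|\le\|N_{n,j}\|\le\omega(j)$ for every $i$, hence $\bigl|\tfrac1{d_n}\sum_i F(\sigma_i(B_{n,j}+N_{n,j}))-\tfrac1{d_n}\sum_i F(\sigma_i(B_{n,j}))\bigr|\le\omega_F(\omega(j))$. Then I would treat $R_{n,j}$: the Weyl interlacing $\sigma_{i+\rho}(X+R)\le\sigma_i(X)\le\sigma_{i-\rho}(X+R)$ with $\rho=\operatorname{rank}R_{n,j}$ shows that the singular–value counting functions of $A_n$ and of $B_{n,j}+N_{n,j}$ differ pointwise by at most $\rho\le c(j)\,d_n$, and the standard low–rank perturbation estimate for spectral functionals (\cite{garoni2017,garoni2018,Barbarino2020a,Barbarino2020b}) then bounds the corresponding difference of averaged sums by $\gamma_F\,c(j)$, with $\gamma_F$ depending only on $F$ and crucially \emph{not} on $\|R_{n,j}\|$; for a self–contained route I would instead first reduce to $F$ in the dense–in–$C_c$ subclass of compactly supported Lipschitz functions — legitimate because $F\mapsto\tfrac1{d_n}\sum_i F(\sigma_i(\cdot))$ and $F\mapsto\tfrac1{\mu_d(D)}\int_D F(|\psi|)$ are $1$–Lipschitz in the uniform norm, uniformly in $n$ — for which the counting–function bound reads $c(j)\,\operatorname{TV}(F)$ outright. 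Either way, term (a) is at most $\varepsilon_1(j):=\gamma_F\,c(j)+\omega_F(\omega(j))\to 0$ as $j\to\infty$.

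For (c), since $\psi_j\to\psi$ in measure on the finite–measure set $D$, also $F(|\psi_j|)\to F(|\psi|)$ in measure (continuity of $t\mapsto F(|t|)$, via the usual subsequence argument on a finite–measure space), and boundedness by $\|F\|_\infty$ lets the bounded convergence theorem give $\varepsilon_3(j):=\bigl|\tfrac1{\mu_d(D)}\int_D F(|\psi_j|)-\tfrac1{\mu_d(D)}\int_D F(|\psi|)\bigr|\to 0$. I would then assemble the $\varepsilon/3$ argument: given $\varepsilon>0$, pick $j$ with $\varepsilon_1(j)+\varepsilon_3(j)<2\varepsilon/3$; hypothesis 1 furnishes $N\ge n_j$ with $\bigl|\tfrac1{d_n}\sum_i F(\sigma_i(B_{n,j}))-\tfrac1{\mu_d(D)}\int_D F(|\psi_j|)\bigr|<\varepsilon/3$ for $n\ge N$; the triangle inequality then yields $\bigl|\tfrac1{d_n}\sum_i F(\sigma_i(A_n))-\tfrac1{\mu_d(D)}\int_D F(|\psi|)\bigr|<\varepsilon$ for $n\ge N$, i.e. $\{A_n\}_n\sim_\sigma\psi$.

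The Hermitian case would go through identically after two substitutions: Weyl's inequality in the form $|\lambda_i(B_{n,j}+N_{n,j})-\lambda_i(B_{n,j})|\le\|N_{n,j}\|$ replaces its singular–value counterpart, and the low–rank eigenvalue interlacing $\lambda_{i+\rho}(X+R)\le\lambda_i(X)\le\lambda_{i-\rho}(X+R)$ follows by decomposing the Hermitian rank–$\le\rho$ matrix $R_{n,j}$ into its positive and negative parts (each of rank $\le\rho$) and applying Weyl's inequality twice; the symbols $\psi_j,\psi$ are real–valued a.e. since they describe eigenvalue distributions of Hermitian sequences (cf. Remark \ref{rem: range}), so $F\in C_c(\mathbb{C})$ is only ever evaluated on $\mathbb{R}$ and the earlier estimates persist. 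The whole argument is insensitive to the multilevel structure, hence unchanged when $n$ is replaced throughout by $\bm{n}\to\infty$. I expect the only genuinely delicate point to be the low–rank estimate in step (a): it is essential that its bound involve $\operatorname{rank}R_{n,j}$ and $F$ but \emph{not} $\|R_{n,j}\|$ (which may be unbounded), which is exactly why singular–value/eigenvalue interlacing — rather than a norm perturbation bound — is the right instrument; all remaining steps are routine.
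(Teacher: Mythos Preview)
The paper does not provide its own proof of this theorem: it is stated in Section~\ref{sec:acs} as a known preliminary result from the GLT literature (see the references \cite{garoni2017,garoni2018,Barbarino2020a,Barbarino2020b}) and used as a black box in the proofs of Theorems~\ref{theorem 1} and~\ref{theorem 2}. There is therefore no in-paper proof to compare against.

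That said, your proposal is correct and is precisely the standard argument found in those references: the three-term $\varepsilon/3$ splitting, the treatment of the small-norm part via Weyl's perturbation inequality $|\sigma_i(X+N)-\sigma_i(X)|\le\|N\|$, the treatment of the low-rank part via singular-value interlacing (yielding a bound depending only on $\operatorname{rank}R_{n,j}/d_n$ and on $F$, \emph{not} on $\|R_{n,j}\|$), and the passage $\psi_j\to\psi$ via bounded convergence. Your emphasis that the low-rank estimate must be rank-based rather than norm-based is exactly the crux, and your Hermitian variant (Weyl for eigenvalues plus interlacing after splitting $R_{n,j}$ into positive and negative parts) is the standard adaptation. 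Nothing is missing.
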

We end this section by observing that the same definition can be given and corresponding results (with obvious changes) hold, when the involved matrix-sequences show a multilevel structure. In that case $n$ is replaced by $\bm{n}$ uniformly in $A_{n}, B_{n,j}, d_{n}$.
\subsection{Matrix-Sequences with Explicit or Hidden (Asymptotic) Structure} \label{subsec:matrix_structures}
In this subsection, we introduce three types of matrix structures that serve as the fundamental building blocks of the GLT \(*\)-algebras. Specifically, for any positive integers \( d \) and \( r \), we consider the set of \( d \)-level \( r \)-block GLT matrix-sequences. This set forms a \(*\)-algebra of matrix-sequences, which is both maximal and isometrically equivalent to the maximal \(*\)-algebra of \( 2d \)-variate \( r \times r \) matrix-valued measurable functions (with respect to the Lebesgue measure) that are naturally defined over $ [0,1]^d \times [-\pi,\pi]^d; $ see \cite{Barbarino2020a, Barbarino2020b, garoni2017, garoni2018} and references therein.\\
The reduced version of this structure plays a crucial role in approximating integro-differential operators, including their fractional versions, particularly when defined over general (non-Cartesian) domains. This concept was initially introduced in \cite{serra2003generalized,SerraCapizzano2006} and later extensively developed in \cite{Barbarino2022}, where GLT symbols are again defined as measurable functions over $ \Omega \times [-\pi,\pi]^d,$ with \( \Omega \) being Peano-Jordan measurable and contained within \( [0,1]^d \). Additionally, the reduced versions also form maximal \(*\)-algebras that are isometrically equivalent to their corresponding maximal \(*\)-algebras of measurable functions.\\
These GLT \(*\)-algebras provide a rich framework of hidden (asymptotic) structures, built upon two fundamental classes of explicit algebraic structures: \( d \)-level \( r \)-block Toeplitz matrix-sequences and sampling diagonal matrix-sequences (discussed in Sections \ref{sec:multi} and \ref{sec:Block}), along with asymptotic structures described by zero-distributed matrix-sequences (see Section \ref{sec:zero}). Notably, the latter class serves an analogous role to compact operators in relation to bounded linear operators, forming a two-sided ideal of matrix-sequences within any of the GLT \(*\)-algebras.
\subsection{Zero-distributed sequences}\label{sec:zero}
 Zero-distributed sequences are defined as matrix sequences $\{A_n\}_n$ such that $\{A_n\}_n \sim_\sigma 0$. Note that, for any $r \geq 1$, $\{A_n\}_n \sim_\sigma 0$ is equivalent to 
$\{A_n\}_n \sim_\sigma O_r$, where $O_r$ is the $r \times r$ zero matrix. The following theorem, taken from \cite{SerraCapizzano2001, garoni2017}, provides a useful characterization for detecting this type of sequence. 
\begin{theorem}
Let $\{A_n\}_n$ be a matrix-sequence, with $A_n$ of size $d_n$ and let $p\in [1,\infty]$, with $\|X\|_p$ being the Schatten $p$-norm of $X$, that is the $l^p$ norm of the vector its singular values. Let $\|\cdot\|=\|\cdot\|_\infty$ be the spectral norm. With  
the natural convention $1/\infty = 0$, we have
\begin{itemize}
    \item $\{A_n\}_n \sim_\sigma 0$ if and only if $A_n = R_n + N_n$ with ${\operatorname{rank}(R_n)}/{d_n} \to 0$ and $ ||N_n|| \to 0$ as $n \to \infty$;
    \item $\{A_n\}_n \sim_\sigma 0$ if there exists $p \in [1,\infty]$ such that 
    \begin{equation*}
        \frac{\|A_n\|_p}{(d_n)^{1/p}} \to 0 \quad \text{as} \quad n \to \infty.
    \end{equation*}
\end{itemize}
\end{theorem}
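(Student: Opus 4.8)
The argument splits according to the two bullet points; throughout I will use only the defining relation $\{A_n\}_n\sim_\sigma 0$, namely $\frac1{d_n}\sum_{i=1}^{d_n}F(\sigma_i(A_n))\to F(0)$ for every $F\in C_c(\mathbb R)$ (Definition~\ref{def:sing and eig} with $\psi\equiv 0$, $r=1$), together with elementary singular value inequalities. I would first dispatch the ``if'' part of the first bullet. Given $A_n=R_n+N_n$ with $\operatorname{rank}(R_n)/d_n\to0$ and $\|N_n\|\to0$, the Weyl-type bound $\sigma_{\operatorname{rank}(R_n)+i}(A_n)\le\sigma_i(N_n)\le\|N_n\|$ shows that all but at most $\operatorname{rank}(R_n)$ of the singular values of $A_n$ lie in the interval $[0,\|N_n\|]$. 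Fixing $F\in C_c(\mathbb R)$ and splitting $\frac1{d_n}\sum_iF(\sigma_i(A_n))$ into these $\operatorname{rank}(R_n)$ ``large'' indices, whose contribution is at most $\frac{\operatorname{rank}(R_n)}{d_n}\|F\|_\infty\to0$, and the remaining ones, on which $F(\sigma_i(A_n))\to F(0)$ uniformly by uniform continuity of $F$, shows the average tends to $F(0)$, i.e.\ $\{A_n\}_n\sim_\sigma 0$.

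For the converse in the first bullet the crux is an intermediate claim: if $\{A_n\}_n\sim_\sigma 0$, then for every $\varepsilon>0$ one has $\frac1{d_n}\#\{i:\sigma_i(A_n)\ge\varepsilon\}\to0$. To establish it I would test the distribution against a bump $G\in C_c(\mathbb R)$ with $0\le G\le1$, $G(0)=1$, and $G\equiv0$ outside $(-\varepsilon,\varepsilon)$; then $\frac1{d_n}\#\{i:\sigma_i(A_n)<\varepsilon\}\ge\frac1{d_n}\sum_iG(\sigma_i(A_n))\to G(0)=1$, and since this fraction never exceeds $1$ it converges to $1$, whence the complementary fraction tends to $0$. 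With the claim in hand I pick a strictly increasing sequence $n_j$ with $\#\{i:\sigma_i(A_n)\ge1/j\}<d_n/j$ for all $n\ge n_j$, and for $n_j\le n<n_{j+1}$ I define $R_n$ and $N_n$ from an SVD of $A_n$ by retaining, respectively discarding, the singular values $\ge 1/j$; then $\operatorname{rank}(R_n)/d_n<1/j\to0$ and $\|N_n\|\le1/j\to0$ as $n\to\infty$ (the finitely many small $n$ are handled by $R_n=A_n$, $N_n=0$), which is exactly the sought decomposition.

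The second bullet is then quick. For $p=\infty$ the hypothesis reads $\|A_n\|\to0$, which is the ``if'' direction of the first bullet with $R_n=0$, $N_n=A_n$. For $p\in[1,\infty)$, from $\|A_n\|_p^p=\sum_{i=1}^{d_n}\sigma_i(A_n)^p$ Markov's inequality gives $\frac1{d_n}\#\{i:\sigma_i(A_n)\ge\varepsilon\}\le\varepsilon^{-p}\bigl(\|A_n\|_p/d_n^{1/p}\bigr)^p\to0$ for each $\varepsilon>0$; setting $\eta_n:=\|A_n\|_p/d_n^{1/p}$, choosing a threshold $\varepsilon_n\to0$ that decays more slowly than $\eta_n$ (for instance $\varepsilon_n=\sqrt{\eta_n}$ when $\eta_n>0$, and $\varepsilon_n=0$ otherwise), and truncating the SVD of $A_n$ at level $\varepsilon_n$ produces $A_n=R_n+N_n$ with $\operatorname{rank}(R_n)/d_n\le\eta_n^{p/2}\to0$ and $\|N_n\|\le\varepsilon_n\to0$, so the first bullet applies. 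The multilevel statement follows verbatim with $n$ replaced by $\bm n$ everywhere.

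The one place that genuinely requires care is the intermediate claim used in the converse of the first bullet: because test functions in $C_c(\mathbb R)$ only probe a bounded window of the singular value axis, one cannot bound $\#\{i:\sigma_i(A_n)\ge\varepsilon\}$ using a function that is $1$ on $[\varepsilon,\infty)$ (not compactly supported) or even on $[\varepsilon,M]$ (which would ignore singular values exceeding $M$). The bound must instead be obtained from the \emph{many small} singular values, via a bump centred at the origin and passing to the complementary count. Everything else — Weyl's inequality, Markov's inequality, and splitting a normalized sum at a small threshold — is routine.
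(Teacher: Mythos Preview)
Your argument is correct. The paper, however, does not supply its own proof of this theorem: it is stated in Section~\ref{sec:zero} as a result ``taken from \cite{SerraCapizzano2001, garoni2017}'', with no argument given. So there is no in-paper proof to compare against; your write-up is a valid, self-contained proof of a statement the paper merely quotes. The route you take---Weyl's singular-value inequality for the ``if'' direction, a bump function at the origin to extract the counting estimate $\frac{1}{d_n}\#\{i:\sigma_i(A_n)\ge\varepsilon\}\to0$ for the ``only if'' direction, and Markov's inequality for the Schatten-$p$ bullet---is the standard one found in the cited references, and your observation that the counting estimate must be obtained from the \emph{small} singular values (because $C_c$ test functions cannot see arbitrarily large singular values directly) is exactly the right point to flag.
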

As in Section \ref{sec:acs}, the same definition can be given and corresponding result (with obvious changes) holds, when the involved matrix-sequences show a multilevel structure.
In that case $n$ is replaced by $\bm{n}$ uniformly in $A_{n}, N_{n}, R_{n}, d_{n}$.

Further results connecting Schatten $p$-norms, unitarily invariant norms (see \cite{Bhatia1997}), variational characterization, and Toeplitz/Hankel structures can be found in \cite{SerraCapizzanoTilli-LPO}.

\subsection{ Multilevel block Toeplitz matrices}\label{sec:multi}
 Given \(\textbf{n} \in \mathbb{N}^d \), a matrix of the form
\begin{equation*}
[A_{\textbf{i}-\textbf{j}}]_{\textbf{i,j=1}}^{\textbf{n}} \in \mathbb{C}^{\nu(\textbf{n})r \times \nu(\textbf{n})r},
\end{equation*}
with blocks \( A_\textbf{k} \in \mathbb{C}^{r \times r} \), \( \textbf{k} \in \{-(\textbf{n}-1), \dots, \textbf{n}-1\} \), is called a multilevel block Toeplitz matrix, or, more precisely, a \( d \)-level \( r \)-block Toeplitz matrix.\\
Given a matrix-valued function \( f : [-\pi,\pi]^d \to \mathbb{C}^{r \times r} \) belonging to \( L^1([- \pi, \pi]^d) \), the \( \mathbf{n} \)-th Toeplitz matrix associated with \( f \) is defined as
\begin{equation*}
T_\mathbf{n}(f):=[\hat{f}_{\mathbf{i-j}}]_{\mathbf{i,j=1}}^{\mathbf{n}} \in \mathbb{C}^{\nu(\mathbf{n})r \times \nu(\mathbf{n})r},
\end{equation*}
where
\begin{equation*}
\hat{f}_\mathbf{k} = \frac{1}{(2\pi)^d} \int_{[-\pi,\pi]^d} f(\bm{\theta}) e^{-\hat{\iota} (\mathbf{k}, \bm{\theta)}} d\bm{\theta} \in \mathbb{C}^{r \times r}, \quad \mathbf{k} \in \mathbb{Z}^d,
\end{equation*}
are the Fourier coefficients of \( f \), in which \( \hat{\iota} \) denotes the imaginary unit, the integrals are computed componentwise, and \( (\mathbf{k}, \bm{\theta}) = k_1\theta_1 + \cdots + k_d\theta_d \). Equivalently, \( T_\mathbf{n}(f) \) can be expressed as
\begin{equation*}
T_\mathbf{n}(f) =
\sum_{|j_1|<n_1} \cdots \sum_{|j_d|<n_d} [J_{n_1}^{(j_1)}
\otimes \cdots \otimes J^{(j_d)}_{n_d}] \otimes \hat{f}(j_1, \cdots, j_d),
\end{equation*}
where \( \otimes \) denotes the Kronecker tensor product between matrices, and \( J^{(l)}_{m} \) is the matrix of order \( m \) whose \( (i,j) \) entry equals 1 if \( i - j = l \) and zero otherwise.\\
The family \( \{T_{\mathbf{n}}(f)\}_{\mathbf{n}\in\mathbb{N}^d} \) is the family of (multilevel block) Toeplitz matrices associated with \( f \), which is called the generating function.
\subsection{Block diagonal sampling matrices}\label{sec:Block}
 Given $d \geq 1$, $\mathbf{n} \in \mathbb{N}^d$ and a function $a : [0,1]^d \to \mathbb{C}^{r \times r}$, we define the multilevel block diagonal sampling matrix $D_{\mathbf{n}}(a)$ as the block diagonal matrix
\begin{equation*}
    D_{\mathbf{n}}(a) = \underset{\mathbf{i=1,\dots,n}}{\operatorname{diag}} a \left( \mathbf{\frac{i}{n}} \right) \in \mathbb{C}^{\nu(\mathbf{n})r \times \nu(\mathbf{n})r}.
\end{equation*}
\subsection{The $*$-Algebra of $d$-Level $r$-block GLT Matrix-Sequences}\label{subsec:glt_algebra}
Let $r \geq 1$ be a fixed integer. A multilevel $r$-block GLT sequence, or simply a GLT sequence if we do not need to specify $r$, is a special multilevel $r$-block matrix-sequence  equipped with a measurable function $ \kappa :[0,1]^d \times[-\pi,\pi]^d \to \mathbb{C}^{r\times r}, \quad d \geq 1,$
called the symbol. The symbol is essentially unique, in the sense that if $\kappa, \xi$ are two symbols of the same 
GLT sequence, then $\kappa = \xi$ a.e. We write \( \{A_n\}_n \sim_{\mathrm{GLT}} \kappa \) to denote that $\{A_n\}_n$ is a GLT 
sequence with symbol $\kappa$.\\
It can be proven that the set of multilevel block GLT sequences is the $*$-algebra 
generated by the three classes of sequences defined in Section \ref{sec:zero},\ref{sec:multi},\ref{sec:Block}: zero-distributed, 
multilevel block Toeplitz, and block diagonal sampling matrix sequences. The GLT class 
satisfies several algebraic and topological properties that are treated in detail in \cite{Barbarino2020a, Barbarino2020b, garoni2017, garoni2018}. Here, we focus on the main operative properties listed below that represent a complete characterization of GLT sequences, equivalent to the full constructive definition.

\subsection*{GLT Axioms}

\begin{itemize}
    \item \textbf{GLT 1.} If $\{A_{\bm{n}}\}_{\bm{n}} \sim_{\mathrm{GLT}} \kappa$ then $\{A_{\bm{n}}\}_{\bm{n}} \sim_\sigma \kappa$ in the sense of Definition \ref{def:sing and eig}, with $t = 2d$ and $D = [0, 1]^d \times [-\pi, \pi]^d$. Moreover, if each $A_{\bm{n}}$ is Hermitian, then $\{A_{\bm{n}}\}_{\bm{n}} \sim_\lambda \kappa$, again in the sense of Definition \ref{def:sing and eig} with $t = 2d$.
    \item \textbf{GLT 2.} We have
    \begin{itemize}
        \item $\{T_{\bm{n}}(f)\}_{\bm{n}} \sim_{\mathrm{GLT}} \kappa(\bm{x}, \bm{\theta}) = f(\bm{\theta)}$ if $f : [-\pi, \pi]^d \to \mathbb{C}^{r \times r}$ is in $L^1([- \pi, \pi]^d)$;
        \item $\{D_{\bm{n}}(a)\}_{\bm{n}} \sim_{\mathrm{GLT}} \kappa(\bm{x}, \bm{\theta}) = a(\bm{x})$ if $a : [0, 1]^d \to \mathbb{C}^{r \times r}$ is Riemann-integrable;
        \item $\{Z_{\bm{n}}\}_{\bm{n}} \sim_{\mathrm{GLT}} \kappa(\bm{x}, \bm{\theta}) = O_r$ if and only if $\{Z_{\bm{n}}\}_{\bm{n}} \sim_\sigma 0$.
    \end{itemize}

    \item \textbf{GLT 3.} If $\{A_{\bm{n}}\}_{\bm{n}} \sim_{\mathrm{GLT}} \kappa$ and $\{B_{\bm{n}}\}_{\bm{n}} \sim_{\mathrm{GLT}} \xi$, then:
    \begin{itemize}
        \item $\{A_{\bm{n}}^*\}_{\bm{n}} \sim_{\mathrm{GLT}} \kappa^*$;
        \item $\{\alpha A_{\bm{n}} + \beta B_{\bm{n}}\}_{\bm{n}} \sim_{\mathrm{GLT}} \alpha \kappa + \beta \xi$ for all $\alpha, \beta \in \mathbb{C}$;
        \item $\{A_{\bm{n}} B_{\bm{n}}\}_{\bm{n}} \sim_{\mathrm{GLT}} \kappa \xi$;
        \item $\{A_{\bm{n}}^\dagger\}_{\bm{n}} \sim_{\mathrm{GLT}} \kappa^{-1}$, provided that $\kappa$ is invertible almost everywhere.
    \end{itemize}
    \item \textbf{GLT 4.} \(\{A_{\bm{n}}\}_{\bm{n}} \sim_{\mathrm{GLT}} \kappa\) if and only if there exist \(\{B_{{\bm{n}},j}\}_{\bm{n}} \sim_{\text{GLT}} \kappa_j\) such that \(\{\{B_{{\bm{n}},j}\}_{\bm{n}}\}_j \xrightarrow{\text{a.c.s.wrt }j} \{A_{\bm{n}}\}_{\bm{n}}\) and \(\kappa_j \rightarrow \kappa\) in measure.
    \item \textbf{GLT 5.} If $\{A_{\bm{n}}\}_n \sim_{\mathrm{GLT}} \kappa$ and $A_{\bm{n}} = X_{\bm{n}} + Y_{\bm{n}}$, where
\begin{itemize}
    \item every $X_{\bm{n}}$ is Hermitian,
    \item $ ||X_{\bm{n}}||, \, ||Y_{\bm{{n}}}|| \leq C$ for some constant $C$ independent of $\bm{{n}}$,
    \item ${\nu}(\bm{n})^{-1} \|Y_{\bm{n}}\|_1 \to 0$,
\end{itemize}
then $\{A_{\bm{n}}\}_n \sim_\lambda \kappa$.
   \item \textbf{GLT 6.}  If $\{A_{\bm{n}}\}_n \sim_{\mathrm{GLT}} \kappa$ and each $A_{\bm{n}}$ is Hermitian, then $\{f(A_{\bm{n}})\}_n \sim_{\mathrm{GLT}} f(\kappa)$ for every continuous function $f : \mathbb{C} \to \mathbb{C}$.

\end{itemize}
Note that, by \textbf{GLT 1}, it is always possible to obtain the singular value distribution from the GLT symbol, while the eigenvalue distribution can only be deduced either if the involved matrices are Hermitian or the related matrix-sequence is quasi-Hermitian in the sense of \textbf{GLT 5}.

\section{Geometric Mean of GLT matrix-sequences}\label{sec GM}

We start by recalling Theorem 4 in \cite{ahmad2025matrix}, which generalizes Theorem 10.2 in \cite{garoni2017}.

\begin{theorem}{\rm \cite[Theorem 4]{ahmad2025matrix}}\label{th:two - r=1,d general}
Let $r=1$ and $d\ge 1$.  Suppose \(\{A_{\bm{n}}\}_{\bm{n}} \sim_{\mathrm{GLT}} \kappa\) and \(\{B_{\bm{n}}\}_{\bm{n}} \sim_{\mathrm{GLT}} \xi\), where \(A_{\bm{n}}, B_{\bm{n}} \in \mathcal{P}_{\nu(\bm{n})}\) for every multi-index $\bm{n}$,{ with $\mathcal{P}_{\bm{(n)}}$ denoting the set of Hermitian positive definite (HPD) matrices of multi-index $\bm{n}$}. Assume that at least one between \(\kappa\) and \(\xi\) is nonzero almost everywhere. Then
\begin{equation}\label{r=1, d general GLT}
\{G(A_{\bm{n}}, B_{\bm{n}})\}_{\bm{n}} \sim_{\mathrm{GLT}}(\kappa \xi)^{1/2}
\end{equation}
and
\begin{equation}\label{r=1, d general distributions}
\{G(A_{\bm{n}}, B_{\bm{n}})\}_{\bm{n}} \sim_{\sigma, \lambda} (\kappa \xi)^{1/2}.
\end{equation}
\end{theorem}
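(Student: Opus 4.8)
Since $r=1$, all the symbols involved are scalar-valued measurable functions on $[0,1]^d\times[-\pi,\pi]^d$, so they commute automatically; this is the feature that makes the statement go through. Because $G(A_{\bm n},B_{\bm n})=G(B_{\bm n},A_{\bm n})$, I would first reduce to the case $\kappa\ne 0$ a.e. (otherwise swap the roles of $A_{\bm n}$ and $B_{\bm n}$). Next, since $A_{\bm n},B_{\bm n}\in\mathcal P_{\nu(\bm n)}$, axiom \textbf{GLT 1} together with Remark \ref{rem: range} gives $\kappa,\xi\ge 0$ a.e., hence $\kappa>0$ a.e. The computation then rests on the identity $G(A_{\bm n},B_{\bm n})=A_{\bm n}^{1/2}(A_{\bm n}^{-1/2}B_{\bm n}A_{\bm n}^{-1/2})^{1/2}A_{\bm n}^{1/2}$, which makes sense for every $\bm n$ because $A_{\bm n}$ is invertible.

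The plan is to compute the GLT symbol of each factor and then combine them through the product rule of \textbf{GLT 3}. Fix a continuous $f:\mathbb C\to\mathbb C$ with $f(t)=\sqrt t$ for $t\ge 0$; since $A_{\bm n}$ is HPD we have $f(A_{\bm n})=A_{\bm n}^{1/2}$, and since $\kappa\ge 0$ a.e. we have $f(\kappa)=\kappa^{1/2}$ a.e., so \textbf{GLT 6} gives $\{A_{\bm n}^{1/2}\}_{\bm n}\sim_{\mathrm{GLT}}\kappa^{1/2}$. For the inverse square root the order of operations matters: I would first invoke the last bullet of \textbf{GLT 3} — this is the only place where $\kappa\ne 0$ a.e. is used — to obtain $\{A_{\bm n}^{-1}\}_{\bm n}=\{A_{\bm n}^{\dagger}\}_{\bm n}\sim_{\mathrm{GLT}}\kappa^{-1}$, and then apply \textbf{GLT 6} with the same $f$ to the HPD sequence $\{A_{\bm n}^{-1}\}_{\bm n}$, obtaining $\{A_{\bm n}^{-1/2}\}_{\bm n}\sim_{\mathrm{GLT}}\kappa^{-1/2}$. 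Performing the inversion at the algebra level before the square root is what lets me work with a square-root function that is continuous at the origin.

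With these in hand, set $C_{\bm n}:=A_{\bm n}^{-1/2}B_{\bm n}A_{\bm n}^{-1/2}$. The product rule of \textbf{GLT 3} gives $\{C_{\bm n}\}_{\bm n}\sim_{\mathrm{GLT}}\kappa^{-1}\xi$; each $C_{\bm n}$ is HPD, so Remark \ref{rem: range} yields $\kappa^{-1}\xi\ge 0$ a.e., and \textbf{GLT 6} gives $\{C_{\bm n}^{1/2}\}_{\bm n}\sim_{\mathrm{GLT}}(\kappa^{-1}\xi)^{1/2}$. Multiplying once more via \textbf{GLT 3},
\[
\{G(A_{\bm n},B_{\bm n})\}_{\bm n}\sim_{\mathrm{GLT}}\ \kappa^{1/2}\,(\kappa^{-1}\xi)^{1/2}\,\kappa^{1/2},
\]
and on the full-measure set $\{\kappa>0\}$ the scalar right-hand side equals $(\kappa\xi)^{1/2}$; as GLT symbols are defined only up to a.e.\ equality, this is \eqref{r=1, d general GLT}. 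Finally, $G(A_{\bm n},B_{\bm n})$ is a congruence of the HPD matrix $C_{\bm n}^{1/2}$, hence itself HPD, so \textbf{GLT 1} promotes the GLT relation to $\{G(A_{\bm n},B_{\bm n})\}_{\bm n}\sim_{\sigma,\lambda}(\kappa\xi)^{1/2}$, which gives \eqref{r=1, d general distributions}.

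I expect the only genuine obstacle to be the single application of the inversion rule of \textbf{GLT 3}: if $\kappa$ vanished on a set of positive measure, $\{A_{\bm n}^{-1}\}_{\bm n}$ would not be guaranteed to be a GLT sequence (its spectral norm need not be controlled), and the chain of symbol identities would break at its first link — which is precisely why the nonvanishing hypothesis is imposed. The remaining steps are routine manipulations with the axioms; note that scalarity ($r=1$) is also what legitimizes the pointwise simplification $\kappa^{1/2}(\kappa^{-1}\xi)^{1/2}\kappa^{1/2}=(\kappa\xi)^{1/2}$, so the matrix-valued case genuinely requires a separate argument.
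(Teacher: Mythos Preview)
The paper does not prove this statement itself; Theorem~\ref{th:two - r=1,d general} is quoted from \cite[Theorem~4]{ahmad2025matrix} and then used as a black box in the proof of Theorem~\ref{theorem 1}. Your argument is correct and is the natural direct route: expand $G(A_{\bm n},B_{\bm n})=A_{\bm n}^{1/2}(A_{\bm n}^{-1/2}B_{\bm n}A_{\bm n}^{-1/2})^{1/2}A_{\bm n}^{1/2}$ and compute the GLT symbol factor by factor, using \textbf{GLT~6} for the square roots, the pseudoinverse item of \textbf{GLT~3} for $A_{\bm n}^{-1}$ (the sole place where $\kappa\ne 0$ a.e.\ is consumed), and the product rule of \textbf{GLT~3} to reassemble; the final simplification $\kappa^{1/2}(\kappa^{-1}\xi)^{1/2}\kappa^{1/2}=(\kappa\xi)^{1/2}$ is legitimate precisely because $r=1$. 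This matches the style of axiom manipulation the paper itself uses in the proofs of Theorems~\ref{theorem 1} and~\ref{theorem 2}, and is presumably the argument given in \cite{ahmad2025matrix}.
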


Using the topological a.c.s. notion, we now show that the thesis of the previous theorem holds without assuming that at least one between $\kappa$ and $\xi$ is invertible almost everywhere (a.e). 

\begin{theorem}\label{theorem 1}
Let $r=1$ and $d\ge 1$. 
Assume \(\{A_{\bm{n}}\}_{\bm{n}} \sim_{\mathrm{GLT}} \kappa\) and \(\{B_{\bm{n}}\}_{\bm{n}} \sim_{\mathrm{GLT}} \xi\), where \(A_{\bm{n}}, B_{\bm{n}} \in \mathcal{P}_{\nu(\bm{n})}\) for every multi-index $\bm{n}$.  Then
\[
\{G(A_{\bm{n}}, B_{\bm{n}})\}_{\bm{n}} \sim_{\mathrm{GLT}} (\kappa \xi)^{1/2},
\]
\[
\{G(A_{\bm{n}}, B_{\bm{n}})\}_{\bm{n}} \sim_{\sigma, \lambda} (\kappa \xi)^{1/2}.
\]
\end{theorem}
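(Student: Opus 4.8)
The plan is to obtain the result from Theorem \ref{th:two - r=1,d general} by a perturbation/approximation argument, using the regularized input sequences $\{A_{\bm n}+\tfrac1j I\}_{\bm n}$ and $\{B_{\bm n}+\tfrac1j I\}_{\bm n}$, which are HPD with GLT symbols $\kappa+\tfrac1j$ and $\xi+\tfrac1j$. These regularized symbols are nonzero a.e.\ (indeed bounded below away from $0$ in modulus on the set where the originals are nonnegative real, which by Remark \ref{rem: range} is a full-measure set since the $A_{\bm n}, B_{\bm n}$ are HPD), so Theorem \ref{th:two - r=1,d general} applies and gives $\{G(A_{\bm n}+\tfrac1j I, B_{\bm n}+\tfrac1j I)\}_{\bm n} \sim_{\mathrm{GLT}} \big((\kappa+\tfrac1j)(\xi+\tfrac1j)\big)^{1/2}$ for each fixed $j$. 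As $j\to\infty$ the symbol $\big((\kappa+\tfrac1j)(\xi+\tfrac1j)\big)^{1/2}$ converges to $(\kappa\xi)^{1/2}$ pointwise a.e., hence in measure (the functions are uniformly dominated on the finite-measure domain). So by \textbf{GLT 4} it suffices to show that $\{\{G(A_{\bm n}+\tfrac1j I, B_{\bm n}+\tfrac1j I)\}_{\bm n}\}_j$ is an a.c.s.\ for $\{G(A_{\bm n},B_{\bm n})\}_{\bm n}$, and then Theorem \ref{th:fundamental acs} (its Hermitian, multilevel version) upgrades $\sim_{\mathrm{GLT}}$ to $\sim_{\sigma,\lambda}$.

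The core estimate is therefore a norm bound on $G(A+\varepsilon I, B+\varepsilon I) - G(A,B)$ for HPD $A,B$ and small $\varepsilon>0$, uniform in the matrix size. Since $r=1$ (scalar GLT), the key simplification is that everything commutes at the symbol level, but at the matrix level $A_{\bm n}, B_{\bm n}$ need \emph{not} commute, so I cannot just use $G(A,B)=(AB)^{1/2}$. I would instead use the integral/operator-monotone representation of the geometric mean, or more elementarily the ALM formula $G(A,B)=A^{1/2}(A^{-1/2}BA^{-1/2})^{1/2}A^{1/2}$ together with two facts: (i) $G$ is monotone in each argument in the Loewner order, so $G(A,B)\le G(A+\varepsilon I,B+\varepsilon I)\le G(A,B)+\varepsilon\, G(I,I)\cdot(\text{something})$-type sandwiching — more precisely, using $A\le A+\varepsilon I\le (1+\varepsilon\|A^{-1}\|)A$ is \emph{not} size-uniform, so I must avoid anything involving $\|A^{-1}\|$; (ii) instead, joint concavity and the bound $0\le G(A+\varepsilon I,B+\varepsilon I)-G(A,B)$ combined with $G(A+\varepsilon I,B+\varepsilon I)\le G(A,B)+G(\varepsilon I,\varepsilon I)=G(A,B)+\varepsilon I$, which follows from superadditivity of the geometric mean ($G(A_1+A_2,B_1+B_2)\ge G(A_1,B_1)+G(A_2,B_2)$). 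This sandwich gives $0\le G(A+\varepsilon I,B+\varepsilon I)-G(A,B)\le \varepsilon I$ in the Loewner order, hence $\|G(A+\varepsilon I,B+\varepsilon I)-G(A,B)\|\le\varepsilon$, with \emph{no} dependence on the size or on the conditioning of $A,B$.

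With that estimate in hand the a.c.s.\ verification is immediate: take $\varepsilon=1/j$, so $N_{\bm n,j}:=G(A_{\bm n}+\tfrac1j I,B_{\bm n}+\tfrac1j I)-G(A_{\bm n},B_{\bm n})$ satisfies $\|N_{\bm n,j}\|\le 1/j=:\omega(j)\to0$ and $R_{\bm n,j}=0$, so $\{\{G(A_{\bm n}+\tfrac1j I,B_{\bm n}+\tfrac1j I)\}_{\bm n}\}_j\xrightarrow{\text{a.c.s.}}\{G(A_{\bm n},B_{\bm n})\}_{\bm n}$. Combining \textbf{GLT 4} (for the GLT conclusion) and Theorem \ref{th:fundamental acs} in its Hermitian multilevel form (for the $\sim_{\sigma,\lambda}$ conclusion, noting $G(A_{\bm n},B_{\bm n})$ is Hermitian and in fact HPD, so its symbol $(\kappa\xi)^{1/2}$ is nonnegative a.e.\ consistently with Remark \ref{rem: range}) finishes the proof.

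\textbf{Main obstacle.} The delicate point is proving the size-uniform sandwich $0\le G(A+\varepsilon I,B+\varepsilon I)-G(A,B)\le\varepsilon I$; the naive approach via $A+\varepsilon I\le(1+\varepsilon\|A^{-1}\|)A$ fails because $\|A_{\bm n}^{-1}\|$ may blow up as $\bm n\to\infty$ (exactly the degenerate regime this theorem is about). The fix is to lean on the \emph{superadditivity} (equivalently joint concavity plus homogeneity) of the matrix geometric mean — $G(X_1+X_2,Y_1+Y_2)\ge G(X_1,Y_1)+G(X_2,Y_2)$ and $G(\varepsilon I,\varepsilon I)=\varepsilon I$ — which are classical properties of the ALM/Kubo--Ando geometric mean and give the upper bound with constant exactly $1$, independent of everything. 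I would cite these from \cite{ando2004geometric,Bhatia2007} rather than reprove them.
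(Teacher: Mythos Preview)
Your central estimate is false, and the error is visible in your own derivation: you state superadditivity correctly as $G(A_1+A_2,B_1+B_2)\ge G(A_1,B_1)+G(A_2,B_2)$, but then invoke it to conclude $G(A+\varepsilon I,B+\varepsilon I)\le G(A,B)+\varepsilon I$, which is the \emph{opposite} direction. Superadditivity actually yields $G(A+\varepsilon I,B+\varepsilon I)\ge G(A,B)+\varepsilon I$, and no upper bound of that form holds. Already for positive scalars one has
\[
(a+\varepsilon)(b+\varepsilon)-(\sqrt{ab}+\varepsilon)^2=\varepsilon(\sqrt a-\sqrt b)^2\ge 0,
\]
so $\sqrt{(a+\varepsilon)(b+\varepsilon)}\ge\sqrt{ab}+\varepsilon$, with strict inequality whenever $a\ne b$. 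Taking $a\to0^+$, $b=1$ gives a difference of order $\sqrt\varepsilon$; at the matrix level, $A_n=\delta_nI_n$ with $\delta_n\to0^+$ and $B_n=I_n$ produces $G(A_n+\tfrac1jI,B_n+\tfrac1jI)-G(A_n,B_n)=c_{n,j}I_n$ with $c_{n,j}\to\sqrt{(1+1/j)/j}\sim j^{-1/2}$ as $n\to\infty$. This perturbation is full rank, so it cannot be absorbed into $R_{n,j}$, and your claimed $\omega(j)=1/j$ fails.

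The regularization strategy is not hopeless, but a size-uniform spectral-norm bound on $G(A_{\bm n}+\varepsilon I,B_{\bm n}+\varepsilon I)-G(A_{\bm n},B_{\bm n})$ cannot be obtained from monotonicity/concavity alone; even the corrected scalar bound $\sqrt{\varepsilon(a+b+\varepsilon)}$ involves $\|A_{\bm n}\|+\|B_{\bm n}\|$, which GLT sequences need not control uniformly (think $T_{\bm n}(f)$ with $f\in L^1\setminus L^\infty$). The paper circumvents this by never estimating that difference directly: it perturbs only one argument, introduces the auxiliary Hermitian sequence $\{(A_{\bm n,\varepsilon}B_{\bm n}^2A_{\bm n,\varepsilon})^{1/4}\}_{\bm n}$, and observes via \textbf{GLT 3}/\textbf{GLT 6} that it carries the \emph{same} GLT symbol $(\kappa_\varepsilon\xi)^{1/2}$ as $\{G(A_{\bm n,\varepsilon},B_{\bm n})\}_{\bm n}$. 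The a.c.s.\ convergence as $\varepsilon\to0$ is then read off from symbol convergence inside the GLT $*$-algebra, avoiding any raw operator-norm inequality for the geometric mean.
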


\begin{proof}
We have
\begin{equation}\label{kappa equ}
    \{A_{\bm{n}}\}_{\bm{n}} \sim_{\mathrm{GLT}} \kappa
\end{equation}
\begin{equation}\label{eta equ}
    \{B_{\bm{n}}\}_{\bm{n}} \sim_{\mathrm{GLT}} \xi
\end{equation}
where $\kappa, \xi: [0,1]^d \times [-\pi,\pi]^d \to \mathbb{C}$ and $A_{\bm{n}}, B_{\bm{n}}$ are Hermitian positive definite matrices $\forall \bm{n}$. 
Therefore $\kappa, \xi: [0,1]^d \times [-\pi,\pi]^d \to \mathbb{R}^+_0$ almost everywhere: furthermore, we assume that $ \mu_{2d}(\kappa \equiv 0) > 0$ and $\mu_{2d}(\xi \equiv 0) > 0$, so that the hypotheses of \cite[Theorem 10.2]{garoni2017} for $d=1$ and of Theorem \ref{th:two - r=1,d general} for $d>1$ are violated. 
Let $\varepsilon > 0$, let $\kappa_\varepsilon = \kappa + \varepsilon$ and let $A_{\bm{n}, \varepsilon} = A_{\bm{n}} + \varepsilon I_{{\nu}(\bm{n})}$. Since $\{ \varepsilon I_{{\nu}(\bm{n})}=T_{\bm{n}}(\varepsilon) \}_{\bm{n}} \sim_{\mathrm{GLT}}\varepsilon$ by the first part of Axiom \textbf{GLT 2.} (the identity is a special Toeplitz matrix with GLT symbol $1$), by exploiting linearity i.e. the second item of Axiom \textbf{GLT 3.}, it follows that
\[ 
\{ A_{\bm{n}, \varepsilon} \}_{\bm{n}} \sim_{\mathrm{GLT}} \kappa_\varepsilon, \quad \kappa_\varepsilon \geq \varepsilon \text{ a.e.} 
\]
since $\kappa, \xi \geq 0$ a.e. due to \eqref{kappa equ} and \eqref{eta equ}.
Hence, we are again in the framework of Theorem \ref{th:two - r=1,d general}. Therefore, by  Theorem \ref{th:two - r=1,d general} we conclude that the sequence of geometric means satisfies the GLT relation
\[ 
\{G(A_{\bm{n},\varepsilon}, B_{\bm{n}}) \}_{\bm{n}} \sim_{\mathrm{GLT}} (\kappa_{\varepsilon} \xi)^{\frac{1}{2}}.
\]
Furthermore, by applying the full $*$-algebra framework of the GLT matrix-sequences and more precisely the third item of Axiom \textbf{GLT 3.} followed by Axiom \textbf{GLT 3.} with the function $f(z)=z^{\frac{1}{4}}$, we obtain
\[ \{ (A_{\bm{n}, \varepsilon} B_{\bm{n}}^{2} A_{\bm{n}, \varepsilon})^{\frac{1}{4}} \}_{\bm{n}} \sim_{\mathrm{GLT}} (\kappa_\varepsilon^2 \xi^{2})^{\frac{1}{4}}=(\kappa_\varepsilon \xi)^{\frac{1}{2}}, \]
which the very same GLT symbol of $ \{G(A_{\bm{n},\varepsilon}, B_{\bm{n}}) \}_{\bm{n}} $. As a result, again by the second item of Axiom \textbf{GLT 3.}, the difference between these sequences satisfies the following asymptotic GLT relation
\begin{equation}\label{reducing without inversion}
\{G(A_{\bm{n}, \varepsilon}, B_{\bm{n}}) - (A_{\bm{n}, \varepsilon} B_{\bm{n}}^{2} A_{\bm{n}, \varepsilon})^{\frac{1}{4}} \}_{\bm{n}} \sim_{{\mathrm{GLT}} } 0.
\end{equation}
The previous relation is the key step since we have found a new GLT matrix-sequence having the same GLT symbol as the geometric mean 
matrix-sequence $\{G(A_{\bm{n}, \varepsilon}, B_{\bm{n}})\}_{\bm{n}}$, but where no inversion is required.
Now given the structure of the performed operations, by virtue of Definition \ref{def:acs}, the class $ \{\{ (A_{\bm{n}, \varepsilon} B_{\bm{n}}^{2} A_{\bm{n}, \varepsilon})^{\frac{1}{4}} \}_{\bm{n}},  \varepsilon>0\}$ is obviously an a.c.s. for $\{ (A_{\bm{n}} B_{\bm{n}}^2 A_{\bm{n}})^{\frac{1}{4}} \}_{\bm{n}} $. As a consequence, by invoking (\ref{reducing without inversion}), also the class $ \{\{ G(A_{\bm{n}, \varepsilon}, B_{\bm{n}}) \}_{\bm{n}},  \varepsilon>0\} $
is an a.c.s for $\{ (A_{\bm{n}} B_{\bm{n}}^2 A_{\bm{n}})^{\frac{1}{4}} \}_{\bm{n}} $
with all the involved sequences being GLT matrix-sequences, with symbols $ (\kappa_\varepsilon \xi)^{\frac{1}{2}}, (\kappa\xi)^{\frac{1}{2}}$, $\varepsilon>0$.
Therefore, since  $ \exists \lim_{\varepsilon \to 0} (\kappa_\varepsilon \xi)^{\frac{1}{2}}= (\kappa\xi)^{\frac{1}{2}}$, by using the powerful Theorem \ref{th:fundamental acs}, we deduce that $ \exists \lim_{\varepsilon \to 0} \text{ (a.c.s.) } \{\{ G(A_{\bm{n}, \varepsilon}, B_{\bm{n}}) \}_{\bm{n}}, \varepsilon \}= \{ (A_{\bm{n}} B_{\bm{n}}^2 A_{\bm{n}})^{\frac{1}{4}} \}_{\bm{n}} \sim_{\mathrm{GLT}} (\kappa\xi)^{\frac{1}{2}}$ and this limit coincides with $ \{ G(A_{\bm{n}}, B_{\bm{n}}) \}_{\bm{n}}$ in the a.c.s. topology, that is, 
$\{G(A_{\bm{n}}, B_{\bm{n}}) - (A_{\bm{n}} B_{\bm{n}}^2 A_{\bm{n}})^{\frac{1}{4}} \}_{\bm{n}}\sim_{{\mathrm{GLT}}} 0.$
Finally $\{(A_{\bm{n}} B_{\bm{n}}^2 A_{\bm{n}})^{\frac{1}{4}} \}_{\bm{n}}\sim_{{\mathrm{GLT}}} (\kappa\xi)^{\frac{1}{2}}$ and hence
\[
\{G(A_{\bm{n}}, B_{\bm{n}})\}_{\bm{n}} \sim_{\mathrm{GLT}} (\kappa \xi)^{1/2},
\]
so that, by Axiom \textbf{GLT 1.}, we infer
\[
\{G(A_{\bm{n}}, B_{\bm{n}})\}_{\bm{n}} \sim_{\sigma, \lambda} (\kappa \xi)^{1/2}.
\]
\end{proof}

\begin{remark}[Intuition on the generalization]
\label{gen vs barrier}
One of the basic but key ingredients of the previous proof is that scalar-valued functions commute. Hence, it is reasonable to expect that the same proof also works in the case of $d$-level $r$-block GLT matrix-sequences, when assuming that the symbols commute. We collect the result in Theorem \ref{theorem 2} whose proof follows verbatim that of Theorem \ref{theorem 1}, except for minimal changes. On the other hand, when both $\kappa$ and $\xi$ are not invertible and do not commute, the expression $G(\kappa,\xi)$ is not well defined. In fact, we could replace the inversion with the standard pseudoinversion, which is denoted as $X^+$ if $X$ is any complex matrix. However, we stress that the expression
$\kappa^{\frac{1}{2}} ([\kappa^{\frac{1}{2}}]^+ \xi [\kappa^{\frac{1}{2}}]^+)^{\frac{1}{2}} \kappa^{\frac{1}{2}}$ is not the same as $\xi^{\frac{1}{2}} ([\xi^{\frac{1}{2}}]^+ \kappa [\xi^{\frac{1}{2}}]^+)^{\frac{1}{2}} \xi^{\frac{1}{2}}$ in general and this is a serious indication that the commutation between the GLT symbols is essential.
\end{remark}

We first recall the result proven in \cite{ahmad2025matrix}, concerning the general GLT setting with $d,r \ge 1$ and when at least one of the involved GLT symbols is invertible almost everywhere.

\begin{theorem}{\rm \cite[Theorem 5]{ahmad2025matrix}} \label{th:two - r,d general}
Let $r,d\ge 1$. 
Suppose \(\{A_{\bm{n}}\}_{\bm{n}} \sim_{\mathrm{GLT}} \kappa\) and \(\{B_{\bm{n}}\}_{\bm{n}} \sim_{\mathrm{GLT}} \xi\), where \(A_{\bm{n}}, B_{\bm{n}} \in \mathcal{P}_{\nu(\bm{n})}\) for every multi-index $\bm{n}$. Assume that at least one of the minimal eigenvalues of \(\kappa\) and the minimal eigenvalue of \(\xi\) is nonzero almost everywhere. Then
\begin{equation}\label{r,d general GLT}
\{G(A_{\bm{n}}, B_{\bm{n}})\}_{\bm{n}} \sim_{\mathrm{GLT}} G(\kappa, \xi)
\end{equation}
and
\begin{equation}\label{r,d general distributions}
\{G(A_{\bm{n}}, B_{\bm{n}})\}_{\bm{n}} \sim_{\sigma, \lambda} G(\kappa,\xi).
\end{equation}
Furthermore $G(\kappa,\xi)=(\kappa \xi)^{1/2}$ whenever the GLT symbols  \(\kappa\) and \(\xi\) commute.
\end{theorem}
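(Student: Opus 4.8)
The plan is to unwind the closed form $G(A_{\bm n},B_{\bm n})=A_{\bm n}^{1/2}\bigl(A_{\bm n}^{-1/2}B_{\bm n}A_{\bm n}^{-1/2}\bigr)^{1/2}A_{\bm n}^{1/2}$ one operation at a time inside the GLT $*$-algebra, because each ingredient (products, continuous functional calculus on Hermitian sequences, and pseudoinversion under an invertibility assumption) is provided by Axioms \textbf{GLT 1}, \textbf{GLT 3}, and \textbf{GLT 6}. Using the symmetry $G(A_{\bm n},B_{\bm n})=G(B_{\bm n},A_{\bm n})$, we may assume without loss of generality that it is the minimal eigenvalue of $\kappa$ that is positive a.e.\ (otherwise interchange $\kappa\leftrightarrow\xi$ and $A_{\bm n}\leftrightarrow B_{\bm n}$); then $\kappa$ is HPD, hence invertible, a.e., so $\kappa^{1/2}$, $\kappa^{-1/2}$, and $\kappa^{-1/2}\xi\kappa^{-1/2}$ are legitimate measurable $r\times r$ matrix-valued symbols on $[0,1]^d\times[-\pi,\pi]^d$, and $G(\kappa,\xi)$ is read via the analogous closed form $\kappa^{1/2}(\kappa^{-1/2}\xi\kappa^{-1/2})^{1/2}\kappa^{1/2}$.

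First I would fix a globally continuous $f:\mathbb C\to\mathbb C$ with $f(t)=\sqrt t$ on $[0,\infty)$ (e.g.\ $f(z)=\sqrt{|z|}$), so that $f(A_{\bm n})=A_{\bm n}^{1/2}$ for every HPD $A_{\bm n}$; by Axiom \textbf{GLT 6}, $\{A_{\bm n}^{1/2}\}_{\bm n}\sim_{\mathrm{GLT}}f(\kappa)=\kappa^{1/2}$. Since $\kappa$ is invertible a.e.\ and $A_{\bm n}^\dagger=A_{\bm n}^{-1}$, the pseudoinverse item of \textbf{GLT 3} gives $\{A_{\bm n}^{-1}\}_{\bm n}\sim_{\mathrm{GLT}}\kappa^{-1}$; this sequence is again HPD, so \textbf{GLT 6} with the same $f$ yields $\{A_{\bm n}^{-1/2}\}_{\bm n}\sim_{\mathrm{GLT}}\kappa^{-1/2}$. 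The product item of \textbf{GLT 3}, used twice, then gives $\{A_{\bm n}^{-1/2}B_{\bm n}A_{\bm n}^{-1/2}\}_{\bm n}\sim_{\mathrm{GLT}}\kappa^{-1/2}\xi\kappa^{-1/2}$; the left-hand matrices are HPD, so \textbf{GLT 6} once more gives $\{(A_{\bm n}^{-1/2}B_{\bm n}A_{\bm n}^{-1/2})^{1/2}\}_{\bm n}\sim_{\mathrm{GLT}}(\kappa^{-1/2}\xi\kappa^{-1/2})^{1/2}$. A final pair of products with $A_{\bm n}^{1/2}$ produces $\{G(A_{\bm n},B_{\bm n})\}_{\bm n}\sim_{\mathrm{GLT}}\kappa^{1/2}(\kappa^{-1/2}\xi\kappa^{-1/2})^{1/2}\kappa^{1/2}=G(\kappa,\xi)$, and since each $G(A_{\bm n},B_{\bm n})$ is HPD, Axiom \textbf{GLT 1} upgrades this to $\{G(A_{\bm n},B_{\bm n})\}_{\bm n}\sim_{\sigma,\lambda}G(\kappa,\xi)$.

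For the last assertion I would argue pointwise: at a.e.\ $(\bm x,\bm\theta)$ the HPD matrices $X=\kappa(\bm x,\bm\theta)$, $Y=\xi(\bm x,\bm\theta)$ commute, hence so do all of their continuous functions; therefore $X^{-1/2}YX^{-1/2}=X^{-1}Y$, $(X^{-1}Y)^{1/2}=X^{-1/2}Y^{1/2}$, and $G(X,Y)=X^{1/2}\,X^{-1/2}Y^{1/2}\,X^{1/2}=X^{1/2}Y^{1/2}=(XY)^{1/2}$, which gives $G(\kappa,\xi)=(\kappa\xi)^{1/2}$ a.e.

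The only truly load-bearing hypothesis is the invertibility of (at least) one of the two symbols, and it is used at exactly one point: the pseudoinverse item of \textbf{GLT 3} to identify the symbol of $\{A_{\bm n}^{-1}\}_{\bm n}$, and, correspondingly, to give meaning to $\kappa^{-1/2}\xi\kappa^{-1/2}$ and to $G(\kappa,\xi)$ itself. I therefore expect the main (essentially the only) obstacle to be the careful justification of that single step; everything else is bookkeeping in the GLT $*$-algebra, the sole mild care being that \textbf{GLT 6} is invoked each time on a Hermitian matrix-sequence whose spectrum lies in $[0,\infty)$, so that a globally continuous extension of $\sqrt{\cdot}$ does the job. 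This also makes transparent why, in the non-commuting case examined later, the invertibility hypothesis cannot be removed: $G(\kappa,\xi)$ would no longer even be well defined.
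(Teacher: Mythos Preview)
The paper does not itself prove this theorem: it is merely recalled from \cite[Theorem~5]{ahmad2025matrix}, so there is no in-paper proof to compare against. Judged on its own merits, your argument is correct and is exactly the approach one expects (and, by analogy with how the surrounding Theorems~\ref{theorem 1} and~\ref{theorem 2} are handled, almost certainly the one in the cited reference): exploit the symmetry $G(A_{\bm n},B_{\bm n})=G(B_{\bm n},A_{\bm n})$ to assume $\kappa$ invertible a.e., then unwind the closed form $A_{\bm n}^{1/2}(A_{\bm n}^{-1/2}B_{\bm n}A_{\bm n}^{-1/2})^{1/2}A_{\bm n}^{1/2}$ one operation at a time via \textbf{GLT~3} (products and pseudoinverse) and \textbf{GLT~6} (square root on Hermitian sequences), finishing with \textbf{GLT~1}. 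Your care in choosing a globally continuous extension of $\sqrt{\cdot}$ so that \textbf{GLT~6} applies is exactly the right point to flag.

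One small imprecision worth cleaning up: in the commuting-case argument you refer to $Y=\xi(\bm x,\bm\theta)$ as HPD, but only $\kappa$ was assumed invertible a.e.; $\xi$ is in general merely HPSD. The computation still goes through verbatim (commuting HPSD matrices with $X$ HPD still satisfy $(X^{-1}Y)^{1/2}=X^{-1/2}Y^{1/2}$ and $(XY)^{1/2}=X^{1/2}Y^{1/2}$), so this is cosmetic, not a gap.
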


\begin{theorem}\label{theorem 2}
Let $r>1$ and $d\ge 1$. 
Assume that  \(\{A_{\bm{n}}\}_{\bm{n}} \sim_{\mathrm{GLT}} \kappa\) and \(\{B_{\bm{n}}\}_{\bm{n}} \sim_{\mathrm{GLT}} \xi\), where \(A_{\bm{n}}, B_{\bm{n}} \in \mathcal{P}_{\nu(\bm{n})}\) for every multi-index $\bm{n}$, with $\mathcal{P}_{\bm{(n)}}$ denoting the set of Hermitian positive definite (HPD) matrices of size $r\bm{n}$. Under the assumption that $\kappa$ and $\xi$ commute we infer 
\[
\{G(A_{\bm{n}}, B_{\bm{n}})\}_{\bm{n}} \sim_{\mathrm{GLT}} (\kappa \xi)^{1/2},
\]
\[
\{G(A_{\bm{n}}, B_{\bm{n}})\}_{\bm{n}} \sim_{\sigma, \lambda} (\kappa \xi)^{1/2},
\]
\end{theorem}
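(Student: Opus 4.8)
The plan is to reproduce the argument of Theorem \ref{theorem 1} essentially line by line, isolating the few places where the scalar (hence commutative) nature of the symbols was used there and replacing them with arguments valid for commuting matrix-valued symbols. First I would record the facts that carry over verbatim: since $A_{\bm{n}},B_{\bm{n}}\in\mathcal{P}_{\nu(\bm{n})}$, applying Axiom \textbf{GLT 3.} to adjoints together with the essential uniqueness of the GLT symbol gives $\kappa=\kappa^{*}$ and $\xi=\xi^{*}$ a.e., while Remark \ref{rem: range} forces $\kappa,\xi$ to be nonnegative definite a.e.; moreover the commutativity hypothesis means that $\kappa(\bm{x},\bm{\theta})$ and $\xi(\bm{x},\bm{\theta})$ are simultaneously unitarily diagonalizable for a.e.\ $(\bm{x},\bm{\theta})$. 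If the minimal eigenvalue of $\kappa$ or of $\xi$ is nonzero a.e., then Theorem \ref{th:two - r,d general} already yields the conclusion, since its last sentence gives $G(\kappa,\xi)=(\kappa\xi)^{1/2}$ under commutativity; hence I may assume that both minimal eigenvalues vanish on sets of positive measure, so that the hypotheses of Theorem \ref{th:two - r,d general} are violated.

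Next, mimicking Theorem \ref{theorem 1}, I would set $A_{\bm{n},\varepsilon}=A_{\bm{n}}+\varepsilon I_{r\nu(\bm{n})}=A_{\bm{n}}+T_{\bm{n}}(\varepsilon I_{r})$, so that $\{A_{\bm{n},\varepsilon}\}_{\bm{n}}\sim_{\mathrm{GLT}}\kappa_{\varepsilon}:=\kappa+\varepsilon I_{r}$ by Axioms \textbf{GLT 2.} and \textbf{GLT 3.}; the two crucial points are that $\kappa_{\varepsilon}\succeq\varepsilon I_{r}\succ 0$ a.e.\ \emph{and} that $\kappa_{\varepsilon}$ still commutes with $\xi$ because $\varepsilon I_{r}$ is central. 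Then Theorem \ref{th:two - r,d general} applies to the pair $(A_{\bm{n},\varepsilon},B_{\bm{n}})$ and, using commutativity, $\{G(A_{\bm{n},\varepsilon},B_{\bm{n}})\}_{\bm{n}}\sim_{\mathrm{GLT}}G(\kappa_{\varepsilon},\xi)=(\kappa_{\varepsilon}\xi)^{1/2}$. On the other hand, Axiom \textbf{GLT 3.} (products and adjoints) gives $\{A_{\bm{n},\varepsilon}B_{\bm{n}}^{2}A_{\bm{n},\varepsilon}\}_{\bm{n}}\sim_{\mathrm{GLT}}\kappa_{\varepsilon}\xi^{2}\kappa_{\varepsilon}$, the matrices $A_{\bm{n},\varepsilon}B_{\bm{n}}^{2}A_{\bm{n},\varepsilon}$ are HPD, and Axiom \textbf{GLT 6.} applied to a continuous extension to $\mathbb{C}$ of $t\mapsto t^{1/4}$ from $[0,\infty)$ yields $\{(A_{\bm{n},\varepsilon}B_{\bm{n}}^{2}A_{\bm{n},\varepsilon})^{1/4}\}_{\bm{n}}\sim_{\mathrm{GLT}}(\kappa_{\varepsilon}\xi^{2}\kappa_{\varepsilon})^{1/4}$. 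This is exactly where commutativity is indispensable: $\kappa_{\varepsilon}\xi^{2}\kappa_{\varepsilon}=\kappa_{\varepsilon}^{2}\xi^{2}=(\kappa_{\varepsilon}\xi)^{2}$, and $\kappa_{\varepsilon}\xi$ is HPD a.e.\ as a product of commuting HPD matrices, so $(\kappa_{\varepsilon}\xi^{2}\kappa_{\varepsilon})^{1/4}=(\kappa_{\varepsilon}\xi)^{1/2}$. Thus $\{G(A_{\bm{n},\varepsilon},B_{\bm{n}})\}_{\bm{n}}$ and $\{(A_{\bm{n},\varepsilon}B_{\bm{n}}^{2}A_{\bm{n},\varepsilon})^{1/4}\}_{\bm{n}}$ carry the same GLT symbol $(\kappa_{\varepsilon}\xi)^{1/2}$, so by the linearity item of Axiom \textbf{GLT 3.} their difference is zero-distributed, which is the exact analogue of \eqref{reducing without inversion}.

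To close the argument I would run the a.c.s.-in-$\varepsilon$ step exactly as in the proof of Theorem \ref{theorem 1}: the class $\{\{(A_{\bm{n},\varepsilon}B_{\bm{n}}^{2}A_{\bm{n},\varepsilon})^{1/4}\}_{\bm{n}}:\varepsilon>0\}$ is an a.c.s., in the sense of Definition \ref{def:acs}, for $\{(A_{\bm{n}}B_{\bm{n}}^{2}A_{\bm{n}})^{1/4}\}_{\bm{n}}$ (the perturbation $A_{\bm{n},\varepsilon}-A_{\bm{n}}=\varepsilon I$ being a small-norm one and $t\mapsto t^{1/4}$ being operator-H\"older), hence by the zero-distributed relation above the class $\{\{G(A_{\bm{n},\varepsilon},B_{\bm{n}})\}_{\bm{n}}:\varepsilon>0\}$ is also an a.c.s.\ for $\{(A_{\bm{n}}B_{\bm{n}}^{2}A_{\bm{n}})^{1/4}\}_{\bm{n}}$, with all involved matrix-sequences GLT and with symbols $(\kappa_{\varepsilon}\xi)^{1/2}\to(\kappa\xi)^{1/2}$ in measure as $\varepsilon\to 0$. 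Theorem \ref{th:fundamental acs} then gives $\{(A_{\bm{n}}B_{\bm{n}}^{2}A_{\bm{n}})^{1/4}\}_{\bm{n}}\sim_{\mathrm{GLT}}(\kappa\xi)^{1/2}$ and identifies this sequence as the a.c.s.-limit of $\{G(A_{\bm{n},\varepsilon},B_{\bm{n}})\}_{\bm{n}}$, so that $\{G(A_{\bm{n}},B_{\bm{n}})-(A_{\bm{n}}B_{\bm{n}}^{2}A_{\bm{n}})^{1/4}\}_{\bm{n}}\sim_{\mathrm{GLT}}0$ and therefore $\{G(A_{\bm{n}},B_{\bm{n}})\}_{\bm{n}}\sim_{\mathrm{GLT}}(\kappa\xi)^{1/2}$; since each $G(A_{\bm{n}},B_{\bm{n}})$ is HPD (it is well defined for every $\bm{n}$ because $A_{\bm{n}},B_{\bm{n}}$ are individually invertible), Axiom \textbf{GLT 1.} upgrades this to $\{G(A_{\bm{n}},B_{\bm{n}})\}_{\bm{n}}\sim_{\sigma,\lambda}(\kappa\xi)^{1/2}$. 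The step I expect to require the most care — and the only genuinely new one with respect to Theorem \ref{theorem 1} — is the pointwise matrix identity $(\kappa_{\varepsilon}\xi^{2}\kappa_{\varepsilon})^{1/4}=(\kappa_{\varepsilon}\xi)^{1/2}=G(\kappa_{\varepsilon},\xi)$: it is precisely the commutativity of $\kappa$ and $\xi$, inherited by $\kappa_{\varepsilon}$ and $\xi$, that makes $\kappa_{\varepsilon}\xi$ Hermitian and positive definite a.e.\ and makes all these matrix functions well defined and mutually consistent, a property which, as stressed in Remark \ref{gen vs barrier}, breaks down for non-commuting, non-invertible symbols.
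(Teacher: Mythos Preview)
Your proposal is correct and follows essentially the same approach as the paper's own proof: regularize $\kappa$ by $\kappa_\varepsilon=\kappa+\varepsilon I_r$, invoke Theorem~\ref{th:two - r,d general} for $\{G(A_{\bm{n},\varepsilon},B_{\bm{n}})\}_{\bm{n}}$, compare with the inversion-free sequence $\{(A_{\bm{n},\varepsilon}B_{\bm{n}}^{2}A_{\bm{n},\varepsilon})^{1/4}\}_{\bm{n}}$ via the symbol identity $(\kappa_\varepsilon\xi^{2}\kappa_\varepsilon)^{1/4}=(\kappa_\varepsilon\xi)^{1/2}$ (the place where commutativity is essential), and pass to the limit $\varepsilon\to 0$ through the a.c.s.\ machinery and Axiom \textbf{GLT 4}/Theorem~\ref{th:fundamental acs}. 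If anything, you are slightly more explicit than the paper in writing the GLT symbol of $A_{\bm{n},\varepsilon}B_{\bm{n}}^{2}A_{\bm{n},\varepsilon}$ as $\kappa_\varepsilon\xi^{2}\kappa_\varepsilon$ before using commutativity, and in flagging the operator-H\"older continuity of $t\mapsto t^{1/4}$ for the a.c.s.\ step.
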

\begin{proof}
Since $\{A_{\bm{n}}\}_{\bm{n}} \sim_{\mathrm{GLT}} \kappa$, $\{B_{\bm{n}}\}_{\bm{n}} \sim_{\mathrm{GLT}} \xi$, we deduce that $\kappa, \xi: [0,1]^d \times [-\pi,\pi]^d \to \mathbb{C}^{r\times r}$ are Hermitian nonnegative definite, while, by the assumptions, $A_{\bm{n}}, B_{\bm{n}}$ are Hermitian positive definite matrices $\forall \bm{n}$. 
Therefore, the minimal eigenvalue of $\kappa$ and the minimal eigenvalues of $\xi$ are nonnegative almost everywhere. Here, we assume that $ \mu_{2d}(\lambda_{\min}(\kappa) \equiv 0) > 0$ and $\mu_{2d}(\lambda_{\min}(\xi) \equiv 0) > 0$, in such a way that the hypotheses of Theorem \ref{th:two - r,d general} are violated. 
Let $\varepsilon > 0$, let $\kappa_\varepsilon = \kappa + \varepsilon I_r$, $I_r$ being the identity of size $r$, and let $A_{\bm{n}, \varepsilon} = A_{\bm{n}} + \varepsilon I_{\mathcal{P}(\bm{n})}$. Since $\{ \varepsilon I_{\mathcal{P}(\bm{n})}=T_{\bm{n}}(\varepsilon I_r) \}_{\bm{n}} \sim_{\mathrm{GLT}} \varepsilon I_r$ by the first part of Axiom \textbf{GLT 2.} (the identity $I_{r{\nu}(\bm{n})}$ is a special multilevel block Toeplitz matrix with GLT symbol $I_r$), by exploiting linearity i.e. the second item of Axiom \textbf{GLT 3.}, it follows that
\[ 
\{ A_{\bm{n}, \varepsilon} \}_{\bm{n}} \sim_{\mathrm{GLT}} \kappa_\varepsilon, \quad \kappa_\varepsilon \geq \varepsilon I_r \text{ a.e.} 
\]
since $\kappa, \xi$ are both nonnegative definite a.e.
Hence, by  Theorem \ref{th:two - r,d general}, we have $\{G(A_{\bm{n},\varepsilon}, B_{\bm{n}}) \}_{\bm{n}} \sim_{\mathrm{GLT}} (\kappa_{\varepsilon} \xi)^{\frac{1}{2}}$, 
since the commutation between $\kappa$ and $\xi$ implies the commutation between $\kappa_\varepsilon = \kappa + \varepsilon I_r$ and $\xi$. 
By exploiting the $*$-algebra features of the GLT matrix-sequences, and specifically the third item of Axiom \textbf{GLT 3.} followed by Axiom \textbf{GLT 3.} with the function $f(z)=z^{\frac{1}{4}}$, we obtain
$ \{ (A_{\bm{n}, \varepsilon} B_{\bm{n}}^{2} A_{\bm{n}, \varepsilon})^{\frac{1}{4}} \}_{\bm{n}} \sim_{\mathrm{GLT}} (\kappa_\varepsilon^2 \xi^{2})^{\frac{1}{4}}=(\kappa_\varepsilon \xi)^{\frac{1}{2}}$, 
which is the very same GLT symbol of $ \{G(A_{\bm{n},\varepsilon}, B_{\bm{n}}) \}_{\bm{n}}$. As a result, again by the second item of Axiom \textbf{GLT 3.}, the difference between these sequences satisfies the following asymptotic relation
\begin{equation}\label{reducing without inversion}
\{G(A_{\bm{n}, \varepsilon}, B_{\bm{n}}) - (A_{\bm{n}, \varepsilon} B_{\bm{n}}^{2} A_{\bm{n}, \varepsilon})^{\frac{1}{4}} \}_{\bm{n}} \sim_{{\mathrm{GLT}} } 0.
\end{equation}
In other words, we have written a new GLT matrix-sequence having the same symbol as the geometric mean 
matrix-sequence $\{G(A_{\bm{n}, \varepsilon}, B_{\bm{n}})\}_{\bm{n}}$, but where no inversion is required.
By virtue of Definition \ref{def:acs}, the class $ \{\{ (A_{\bm{n}, \varepsilon} B_{\bm{n}}^{2} A_{\bm{n}, \varepsilon})^{\frac{1}{4}} \}_{\bm{n}},  \varepsilon>0\}$ is an a.c.s. for $\{ (A_{\bm{n}} B_{\bm{n}}^2 A_{\bm{n}})^{\frac{1}{4}} \}_{\bm{n}} $. Therefore, by (\ref{reducing without inversion}), also the class $ \{\{ G(A_{\bm{n}, \varepsilon}, B_{\bm{n}}) \}_{\bm{n}},  \varepsilon>0\} $
is an a.c.s for $\{ (A_{\bm{n}} B_{\bm{n}}^2 A_{\bm{n}})^{\frac{1}{4}} \}_{\bm{n}} $
with all the involved sequences being GLT matrix-sequences, with symbols $ (\kappa_\varepsilon \xi)^{\frac{1}{2}}, (\kappa\xi)^{\frac{1}{2}}$, $\varepsilon>0$. Since  $ \exists \lim_{\varepsilon \to 0} (\kappa_\varepsilon \xi)^{\frac{1}{2}}= (\kappa\xi)^{\frac{1}{2}}$, Theorem \ref{th:fundamental acs} implies $ \exists \lim_{\varepsilon \to 0} \text{ (a.c.s.) } \{\{ G(A_{\bm{n}, \varepsilon}, B_{\bm{n}}) \}_{\bm{n}}, \varepsilon \}= \{ (A_{\bm{n}} B_{\bm{n}}^2 A_{\bm{n}})^{\frac{1}{4}} \}_{\bm{n}} \sim_{\mathrm{GLT}} (\kappa\xi)^{\frac{1}{2}}$. It is now clear that this limit coincides with $ \{ G(A_{\bm{n}}, B_{\bm{n}}) \}_{\bm{n}} $
in the a.c.s. topology, that is, 
$\{G(A_{\bm{n}}, B_{\bm{n}}) - (A_{\bm{n}} B_{\bm{n}}^2 A_{\bm{n}})^{\frac{1}{4}} \}_{\bm{n}}\sim_{{\mathrm{GLT}}} 0.$
In this manner $\{(A_{\bm{n}} B_{\bm{n}}^2 A_{\bm{n}})^{\frac{1}{4}} \}_{\bm{n}}\sim_{{\mathrm{GLT}}} (\kappa\xi)^{\frac{1}{2}}$. Consequently
$\{G(A_{\bm{n}}, B_{\bm{n}})\}_{\bm{n}} \sim_{\mathrm{GLT}} (\kappa \xi)^{1/2}$, so that, by Axiom \textbf{GLT 1.}, we finally obtain
$\{G(A_{\bm{n}}, B_{\bm{n}})\}_{\bm{n}} \sim_{\sigma, \lambda} (\kappa \xi)^{1/2}$.
\end{proof}

\section{Numerical experiments}\label{Num_Exp}

As it is well known, many localization, extremal, and distribution results hold when $d$-level, $r$-block Toeplitz matrix-sequences are considered and these results are somehow summarized in specific analytic features of the generating function of the corresponding matrix-sequence. In turn, the generating function is also the $d$-variate, $r\times r$ matrix-valued GLT symbol of the $d$-level, $r$-block Toeplitz matrix-sequence. 

While the distribution results are also valid for general $d$-level, $r$-block GLT matrix-sequences, this is no longer true in general for the extremal behavior and for the localization results, unless we add supplementary assumptions, like the request that the matrix-sequence is obtained via a matrix-valued linear positive operator (LPO) \cite{SerraCapizzanoTilli-LPO,LPO-rev}. We observe that the geometric mean can be seen as a monotone operator with respect to its two variables, and the monotonicity is implied when we consider an LPO, even if the converse is not true. Hence, it is also interesting to verify which properties are maintained by a geometric mean of two $d$-level, $r$-block GLT matrix-sequences in terms of its GLT symbol when it exists.

According to the previous discussion, the rest of the section considers numerical experiments in the following directions.

\begin{itemize}
\item Validation of the distribution results in the commuting setting as in Theorem \ref{theorem 1} and in Theorem \ref{theorem 2}.
\item Connections of the previous results with the notion of Toeplitz and GLT momentary symbols and the extremal behavior compared to the GLT symbol. 
\item Evidence that Theorem \ref{theorem 1} and Theorem \ref{theorem 2} are maximal, by taking GLT matrix-sequences with noncommuting symbols which are both not invertible a.e. 
\end{itemize}
  
\subsection{Validation of the distribution results} \label{ssec:num1}
  
\subsubsection{Example 1 }
Let $ d = 1 $, and consider the following two matrix sequences $
A_n = D_n(a) + \frac{1}{n^4} I_n$ and  $ B_n = T_n(3 + 2\cos(\theta)),$
 where $T_n(\cdot) $ denotes the Toeplitz operator for $d=1 $, as introduced in Section \ref{sec:multi}, and $D_n(a) $ represents the diagonal matrix generated by the continuous function
\[
a(x) =
\begin{cases}
0, & x \in [0, \frac{1}{2}), \\
1, & x \in [\frac{1}{2},1].
\end{cases}
\]
The geometric mean of these two sequences, $\{ G(A_n, B_n) \} $, is known to satisfy the GLT relation:
\[
\left\{ G\left(A_n, B_n\right) \right\}_n \sim_{\mathrm{GLT}} \sqrt{a(x)(3+2\cos(\theta))} 
\]
where the corresponding GLT symbols are given explicitly by
$
\kappa = a(x)$ and $ \xi=3+2\cos(\theta).
$
\subsubsection*{Eigenvalue distribution}
We numerically analyze the spectral behavior of the geometric mean $G(A_n, B_n) $ from \textbf{Example 1}. The eigenvalues of this geometric mean are computed for various increasing dimensions $n $ and compared against uniformly sampled points from the GLT symbol $\sqrt{a(x)(3+2\cos(\theta))} $. In \textbf{Figure 1}, numerical results strongly indicate that the GLT symbol accurately characterizes the eigenvalue distribution of the geometric mean. As $ n $ grows, the eigenvalue distribution closely matches the GLT symbol, giving evidence of the theoretical findings.
\begin{figure}[H]
   \begin{minipage}{0.52\textwidth}
     \centering
     \includegraphics[width=\linewidth, height=6cm]{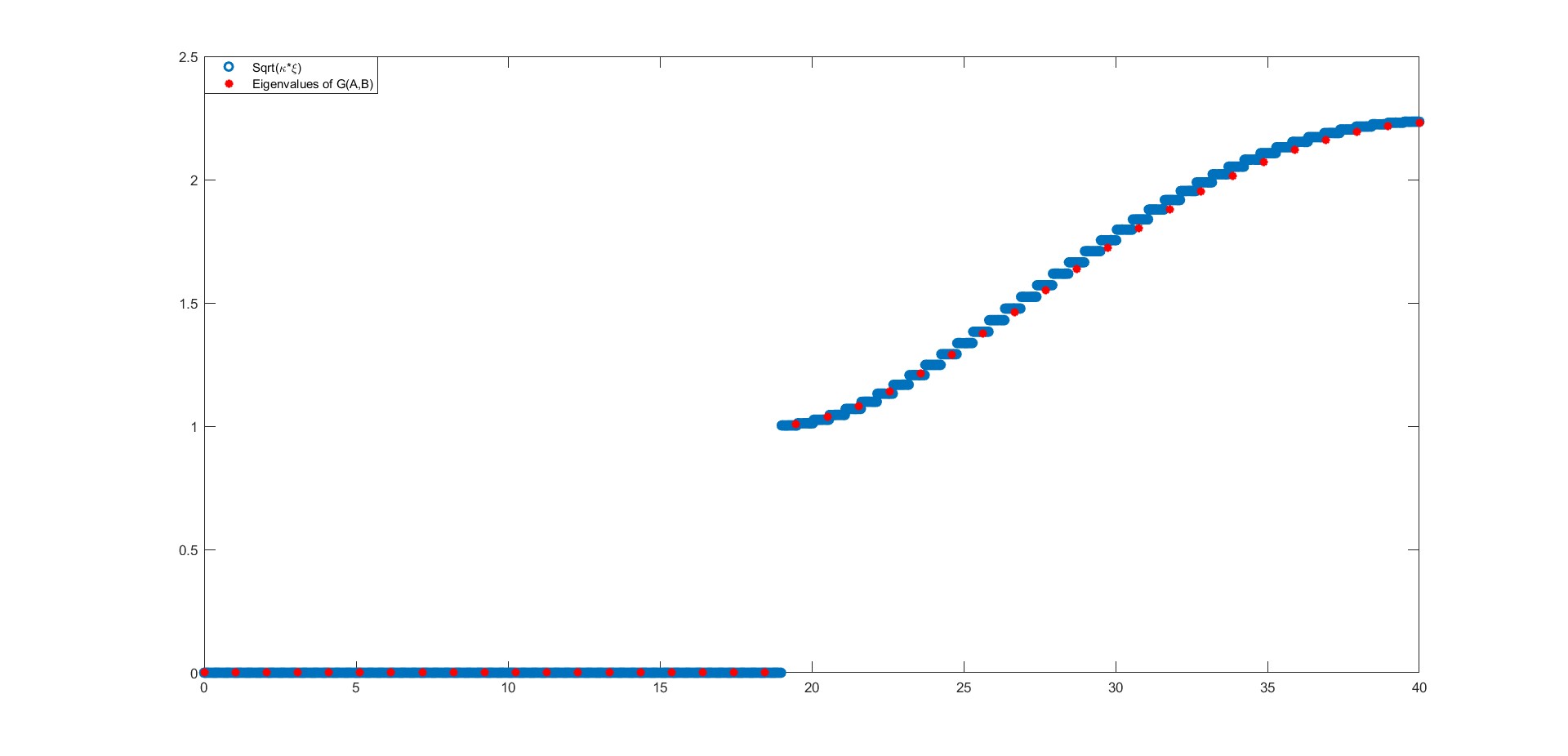} 
     \captionsetup{labelformat=empty}
     \caption{(a) n=40}
   \end{minipage}\hfill
   \begin{minipage}{0.52\textwidth}
     \centering
     \includegraphics[width=\linewidth, height=6cm]{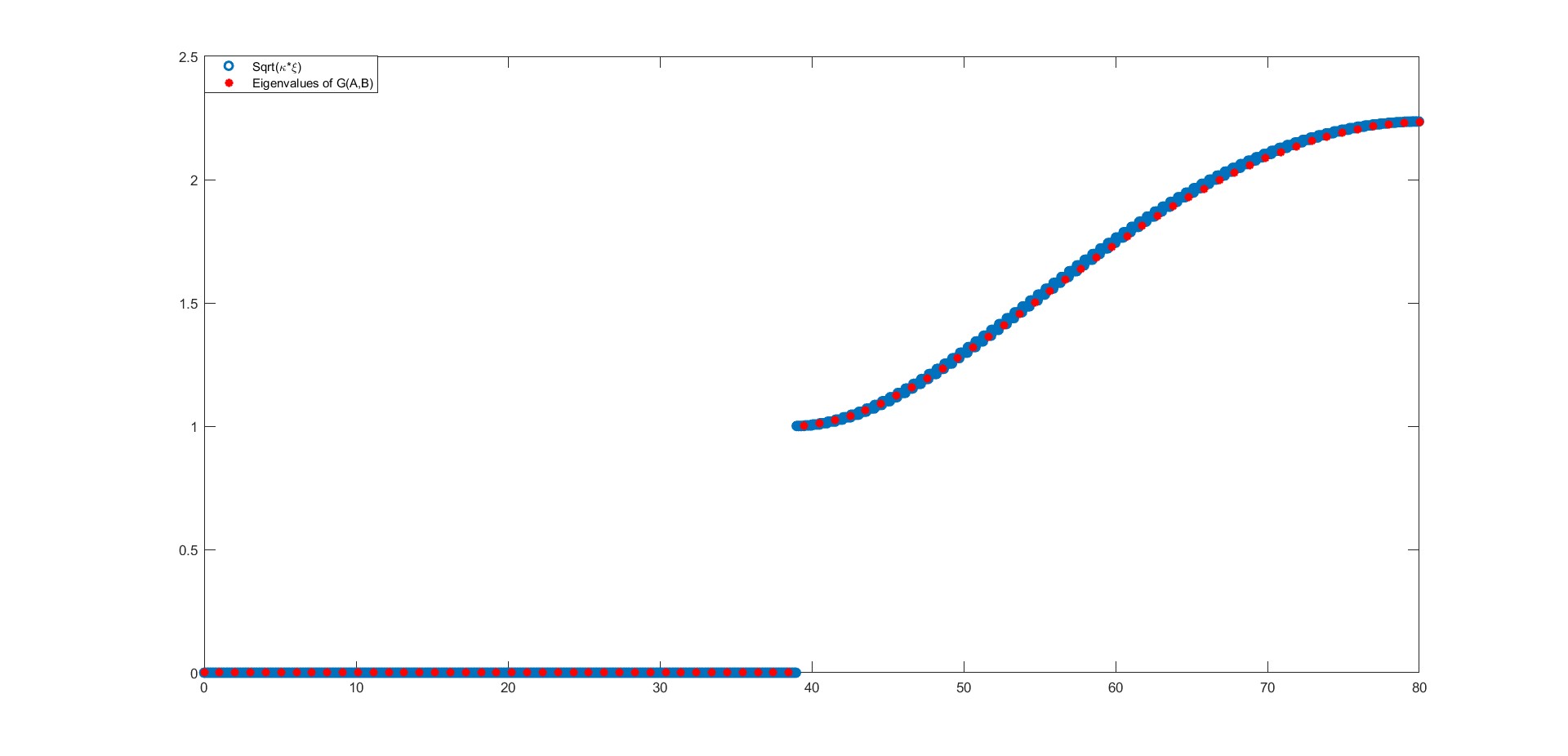} 
     \captionsetup{labelformat=empty}
     \caption{(b) n=80}
   \end{minipage}
\end{figure}
\quad \quad  \quad \textbf{Figure 1}: Comparison between the symbol $(\kappa\xi)^{\frac{1}{2}} $ and $\text{eig}(G(A_{n},B_{n}) $.

\subsection{Distribution results and momentary symbols} \label{ssec:num2}

\subsubsection{Example 2 }
Let $d = 1 $, and define the sequences $
A_n = D_n(a) + \frac{1}{n^4} I_n,$  $ C_n = T_n(3 + 2\cos(\theta))
$ and \\$
B_n = \left(D_n(1 - a) + \frac{1}{n^4} I_n \right) C_n \left(D_n(1 - a) + \frac{1}{n^4} I_n \right),
$ where \( D_n(a) \) is the diagonal matrix generated by the piecewise continuous function \( a(x) \) given in \textbf{Example 1}. Then, the geometric mean sequence explicitly satisfies the GLT relation:
\[
\left\{ G\left(A_n, B_n\right) \right\}_n \sim_{\mathrm{GLT}} \sqrt{a(x)(1-a(x))(3+2\cos(\theta))} = a(x)(1-a(x)) \sqrt{3+2\cos(\theta)}=0,
\]
where
\[
\kappa = a(x) \quad \xi=(1-a(x))(3+2\cos(\theta)).
\]
The zero-valued symbol arises naturally due to the definition of the piecewise function \( a(x) \):
\begin{itemize}
    \item For \( x \in [0, \frac{1}{2}) \), we have \( a(x) = 0 \). Thus, the term \( a(x)(1 - a(x)) \) becomes zero.
    \item For \( x \in [\frac{1}{2},1] \), we have \( a(x) = 1 \). In this interval, the factor \( (1 - a(x)) \) becomes zero, making \( a(x)(1 - a(x)) = 0 \).
\end{itemize}

\subsubsection*{Eigenvalue distribution}
The eigenvalue distribution of \( G(A_n, B_n) \) is analyzed numerically for increasing matrix dimensions \( n \). Since the GLT symbol for this example is zero, we expect the eigenvalues to concentrate along the zero line. However, numerical results reveal additional structure:\\

{For smaller values of $ n $, eigenvalues align closely with zero, as predicted by the GLT symbol. However, some eigenvalues appear above this level, forming a secondary symbol. This phenomenon is attributed to {momentary symbols}, as discussed in the introduction (see \cite{bolten2022toeplitz,bolten2023note,new momentary}). As \( n \) increases, these elevated eigenvalues shift upwards—reaching approximately \( 10^{-3} \) for \( n = 40 \) (shown in the \textbf{Figure 2}) and \( 10^{-4} \) for \( n = 80 \).}
\begin{figure}[H]
   \begin{minipage}{0.52\textwidth}
     \centering
     \includegraphics[width=\linewidth, height=6cm]{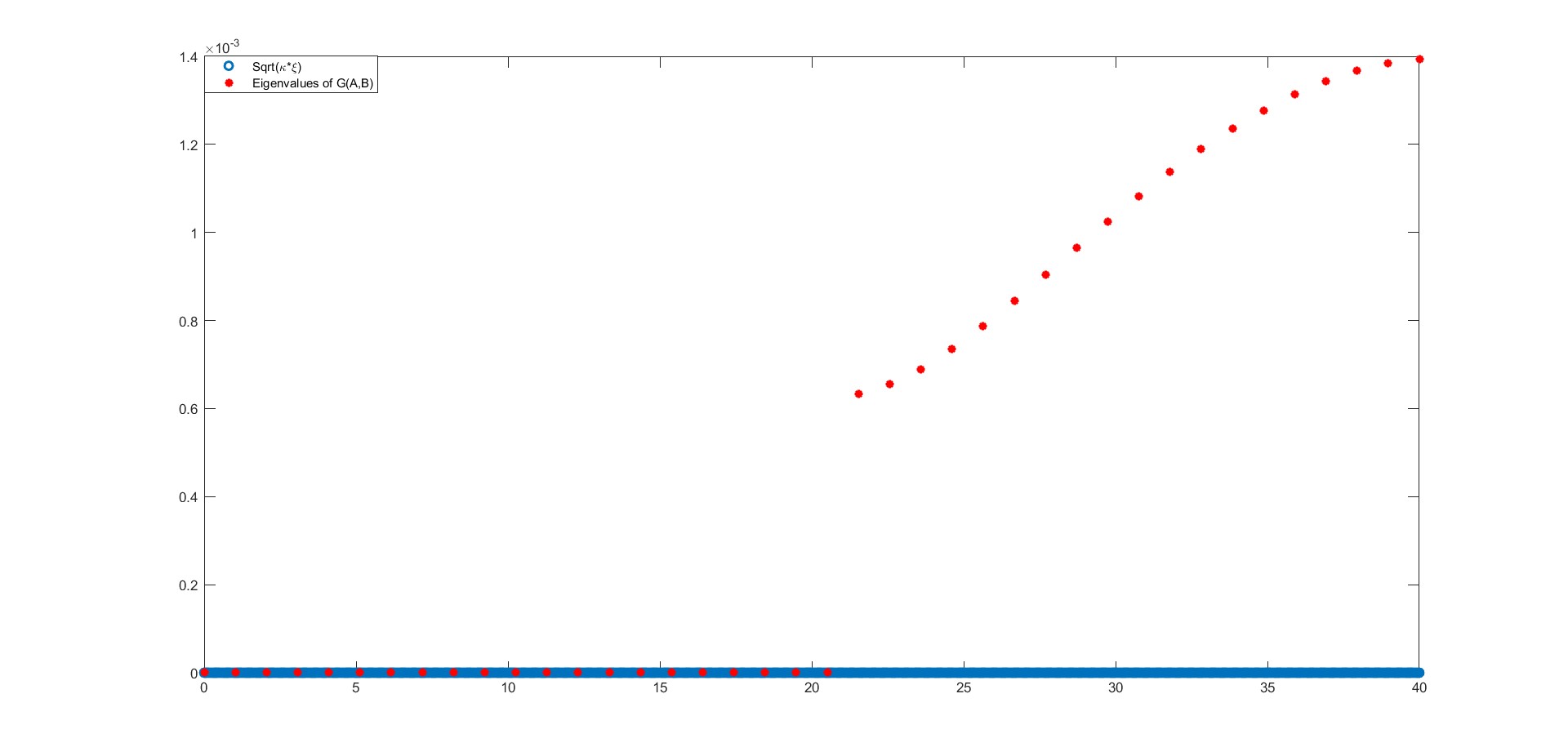} 
     \captionsetup{labelformat=empty}
     \caption{(a) n=40}
   \end{minipage}\hfill
   \begin{minipage}{0.52\textwidth}
     \centering
     \includegraphics[width=\linewidth, height=6cm]{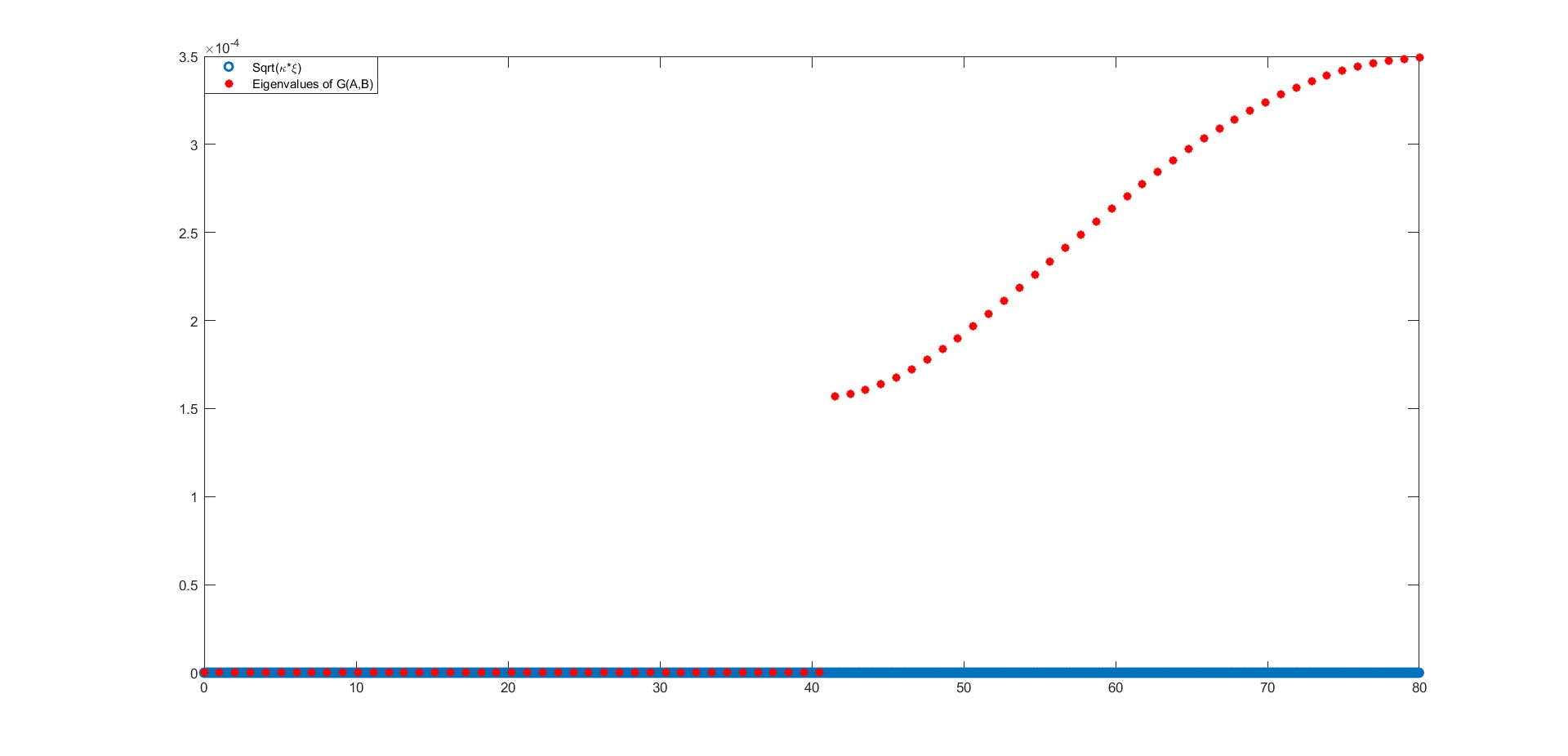} 
     \captionsetup{labelformat=empty}
     \caption{(b) n=80}
   \end{minipage}
\end{figure}
\quad \quad  \quad \textbf{Figure 2}: Comparison between the symbol \( (\kappa\xi)^{\frac{1}{2}} \) and \( \text{eig}(G(A_{n},B_{n}) \).

\subsection{Minimal eigenvalues and conditioning} \label{ssec:num3}

In this section, we analyze the extremal spectral behavior and conditioning of the geometric mean sequence \( \{ G(A_n, B_n) \}_n \), focusing on its dependence on the analytical properties of the corresponding GLT symbol. This approach follows prior studies on extremal eigenvalues in structured matrix settings, particularly in Toeplitz matrices \cite{Bottcher1998,SerraCapizzano1996,SerraCapizzano1998,SerraCapizzanoTilli1999} and block Toeplitz matrices \cite{SerraCapizzano1999a,SerraCapizzano1999b}, as well as variable coefficient differential operators, including multilevel cases with \( d > 1 \) \cite{Vassalos2018,Noutsos2008,extr2-glt,extr2-glt}. In the case of variable coefficient differential operators it is worth noticing that extremal spectral results do not stem from the GLT theory but from a combination of GLT tools and properties which are typical of linear positive operators 
\cite{SerraCapizzanoTilli-LPO,LPO-rev}.

Here, we restrict our attention to the unilevel scalar setting with \( d = r = 1 \), considering \textbf{Example 1} and \textbf{Example 2}.
\subsubsection{Example 1: Minimal Eigenvalue}
\begin{itemize}
    \item $X_{n}=G(A_{n},B_{n})$
    \item Take $n_{j}=40.2^{j}, \quad\quad j=0,1,2,3, $
    \item Compute $\tau_{j}= \lambda_{min}(X_{n}), \quad \quad j=0,1,2,3, $
    \item Compute $\alpha_{j}=\log_{2}(\frac{\tau_{j}}{\tau_{j+1}}), \quad \quad j=0,1,2. $ 
\end{itemize}
\vspace{9pt}

\begin{table}[H]
  \centering
  \caption{Numerical behaviour of the minimal eigenvalue.}
  \label{tab:tau_alpha}
  \begin{tabular}{|c
                  |S[table-format=1.4e+2]
                  |S[table-format=1.4]|}
    \hline
    {$n$} & {$\tau_j$} & {$\alpha_j$}\\
    \hline
    40  & 6.3260e-04 & 2.0132 \\
    80  & 1.5670e-04 & 2.0064 \\
    160 & 3.9000e-05 & 2.0074 \\
    320 & 9.7000e-06 & \multicolumn{1}{c|}{--}     \\
    \hline
  \end{tabular}
\end{table}

As it can be seen, the quantity $\alpha_j$ converges to $2$ as $n$ increases. This is in perfect agreement with the fact that the minimal eigenvalue of $A_n$ converges to zero as $n^{-4}$, while the minimal eigenvalue of $B_n$ converges monotonically from above to $\min \xi=1$. The key point is that the minimal eigenvalue of the geometric mean behaves asymptotically as the geometric mean of the minimal eigenvalues of  $A_n$ and $B_n$, respectively. Similar remarks can be done in the subsequent case regarding Example 2.
 
 The observed numerical evidences are not implied by the theoretical derivations and this is an interesting fact, which deserves to be investigated theoretically in the future.

\subsubsection{Example 2: Minimal Eigenvalue}
\begin{itemize}
    \item $X_{n}=G(A_{n},B_{n})$
    \item Take $n_{j}=40.2^{j}, \quad\quad j=0,1,2,3, $
    \item Compute $\tau_{j}= \lambda_{min}(X_{n}), \quad \quad j=0,1,2,3, $
    \item Compute $\alpha_{j}=\log_{2}(\frac{\tau_{j}}{\tau_{j+1}}), \quad \quad j=0,1,2. $ 
\end{itemize}
\vspace{9pt}
\begin{table}[H]
  \centering
  \caption{Numerical behaviour of the minimal eigenvalue.}
  \label{tab:tau_alpha}
  \begin{tabular}{|c
                  |S[table-format=1.4e+2]
                  |S[table-format=1.4]|}
    \hline
     {$n$} & {$\tau_j$} & {$\alpha_j$} \\
    \hline
    40  & 3.9177e-07 & 4.0003 \\
    80  & 2.4480e-08 & 4.0040 \\
    160 & 1.5250e-09 & 4.0009 \\
    320 & 9.5000e-11 & \multicolumn{1}{c|}{--}      \\
    \hline
  \end{tabular}
\end{table}


\subsection{Numerical study: non‑commuting, rank–deficient symbols}
\label{sec:non_commuting_rank_def}

In this section, we investigate, through numerical tests, the eigenvalue distribution of the geometric mean
\[
   G(A_n,B_n)\;=\;
   A_n^{1/2}\!\bigl(A_n^{-1/2}B_nA_n^{-1/2}\bigr)^{1/2}\!A_n^{1/2},
\]
where $\{A_n\}_n$ and $\{B_n\}_n$ are HPD GLT sequences  with size~$r$ matrix‑valued symbols  
$\kappa,\xi$.  
The GLT sequences are specifically chosen with GLT symbols so that: 

\begin{enumerate}
  \item they are rank‑deficient on a subset of positive Lebesgue measure; and
  \item they do not commute pointwise on a set of positive measure.
\end{enumerate}

These hypotheses clearly set the problem outside the scope of commuting or almost everywhere invertible cases treated in Theorems~\ref{theorem 1} and~\ref{theorem 2}. \\
Our tests provide evidence for two phenomena:
\begin{itemize}
  \item[\textbf{(i)}]
    The support of the asymptotic eigenvalue distribution of $G(A_n,B_n)$ coincides with the intersection set $\operatorname{ess\,Ran}(\kappa)\cap\operatorname{ess\,Ran}(\xi)$. 
  \item[\textbf{(ii)}]  The eigenvalues appear to converge in distribution to a candidate symbol which is rank‑deficient and, in general, distinct from the classical geometric mean $G(\kappa,\xi)$.
\end{itemize}
Clearly, the relation \( G(A_n, B_n) \sim_{\mathrm{GLT}} G(\kappa, \xi) \) cannot hold under hypotheses (1)-(2), and the experiments show the substantial differences in this case compared to the commuting or invertible scenarios. If this case can be studied within the GLT framework, it requires a more technical application of GLT and a.c.s. theory, along with a deeper functional calculus of matrix means.

\subsubsection{Setup of the numerical experiments}
\label{sec:NumSetup}

Below, we provide the setup of the numerical tests. \\
We construct four pairs of unilevel GLT sequences $\bigl(\{A_n\}_n,\{B_n\}_n\bigr)$ whose symbols
simultaneously satisfy the hypothesis stated at the beginning of
Section~\ref{sec:non_commuting_rank_def} (rank deficiency and
non‑commutation). They are organized by the property of rank loss:

\begin{itemize}\setlength\itemsep{4pt}
  \item \textbf{Case 1} — each symbol is full rank on a set of positive
        measure;
  \item \textbf{Case 2} — each symbol is rank‑deficient almost everywhere.
\end{itemize}

\medskip\noindent
\textbf{Case 1, Example 1.}

Define
\[
  f(\theta)\; =\; \begin{cases}
               0, & \theta\in[-\pi,0],\\[4pt]
               \theta, & \theta\in(0,\pi],
             \end{cases}
  \qquad
  g(\theta) = f(-\theta),
\]
and set
\[
  F(\theta)\;=\;f(\theta)\otimes
     \begin{bmatrix}2&1\\[2pt]1&2\end{bmatrix},\qquad
  G(\theta)\;=\;g(\theta)\otimes
     \begin{bmatrix}3&1\\[2pt]1&1\end{bmatrix}.
\]
With the $2n\times2n$ Toeplitz matrices $T_n(F)$ and $T_n(G)$, define
\begin{equation}\label{eq:case1_ex1}
  A_n=T_n(F)+\frac{1}{n^{3}}I_{2n},\qquad
  B_n=T_n(G)+\frac{1}{n^{3}}I_{2n}.
\end{equation}

\medskip\noindent
\textbf{Case 1, Example 2.}

Let $\chi_{[-a,a]}$ be the characteristic function of $[-a,a]$, $0<a<\pi$, and set
\[
  f(\theta)\,=\,\chi_{[-1/2,\,1/2]}(\theta),\qquad
  g(\theta)\,=\,\chi_{[-1/4,\,1/4]}(\theta),
\]
\[
  A\;=\;\begin{bmatrix}2&1\\[2pt]1&2\end{bmatrix},\qquad
  B\;=\;\begin{bmatrix}3&1\\[2pt]1&1\end{bmatrix},
\]
\[
  F(\theta)\;=\;f(\theta)\otimes A,\qquad
  G(\theta)\;=\;g(\theta)\otimes B.
\]
Define
\begin{equation}\label{eq:case1_ex2}
  A_n=T_n(F)+\frac{1}{n^{3}}I_{2n},\qquad
  B_n=T_n(G)+\frac{1}{n^{3}}I_{2n}.
\end{equation}

\medskip\noindent
\textbf{Case 2, Example 1.}

With $f(\theta)=2-\cos\theta$, $g(\theta)=3+\cos\theta$ and
rank one blocks
\[
  A\;=\;\begin{bmatrix}1&1\\[2pt]1&1\end{bmatrix},\qquad
  B\;=\;\begin{bmatrix}1&2\\[2pt]2&4\end{bmatrix},
\]
let
\(F(\theta)\;=\;f(\theta)\otimes A\) and
\(G(\theta)\;=\;g(\theta)\otimes B\).
Set
\[
  A_n=T_n(F)+\frac{1}{n^{2}}I_{2n},\qquad
  B_n=T_n(G)+\frac{1}{n^{2}}D_n(b),\qquad
  b(x)=(1+x) I_2.
\]
Here, both symbols are rank $1$ almost everywhere.

\medskip\noindent
\textbf{Case 2, Example 2.}

Define piecewise–linear scalar functions in $[0,1]$:
\[
  a(x)\;=\;\begin{cases}
          1-2x, & 0\le x\le\tfrac12,\\[2pt]
          0, & \tfrac12<x\le1,
        \end{cases}
  \qquad
  b(x)\;=\;\begin{cases}
          0, & 0\le x\le\tfrac13\text{ or } \tfrac23\le x\le1,\\[2pt]
          x-\tfrac13, & \tfrac13<x\le\tfrac12,\\[2pt]
          \tfrac23-x, & \tfrac12<x<\tfrac23,
        \end{cases}
\]
and let $D_n(a)$, $D_n(b)$ be the corresponding sampling matrices.
With $f(\theta)=2+\cos\theta$, $g(\theta)=3+\cos\theta$ and
\[
  A\;=\;\begin{bmatrix}2 & 0 & 1\\[2pt]
  0 & 2 & 1\\[2pt]
  1 & 1 & 1\end{bmatrix},\qquad
  B\;=\;\begin{bmatrix}2 & 1 & 0\\[2pt]
  1 & 1 & 1\\[2pt]
  0 & 1 & 2\end{bmatrix},
\]
define
\(F(\theta)\;=\;f(\theta)\otimes A\) and
\(G(\theta)\;=\;g(\theta)\otimes B\), and set
\begin{align}\label{eq:case2_ex2}
      A_n &\;=\; \bigl(D_n^{1/2}(a)\,\otimes\, I_3\bigr)\,
T_n(F)\, \bigl(D_n^{1/2}(a)\,\otimes\, I_3\bigr)
+\frac{1}{5n}\,I_{3n}, \\
  B_n&\;=\; \bigl(D_n^{1/2}(b)\,\otimes\, I_3\bigr)\,
T_n(G)\,
\bigl(D_n^{1/2}(b)\,\otimes\, I_3\bigr)
+\frac{1}{5n}\,I_{3n}
\end{align}

\medskip
In every example we denote by $\kappa$ and $\xi$ the GLT symbols of
$\{A_n\}_n$ and $\{B_n\}_n$, respectively.

\medskip
\subsubsection{A candidate symbol for the geometric mean}
\label{subsubsec:GM_conjecture}

Let $\{A_n\}_n$ and $\{B_n\}_n$ be GLT sequences whose matrix–valued symbols  
\[
  \kappa,\;\xi : [0,1]^d\times[-\pi,\pi]^d \;\longrightarrow\; \mathbb{C}^{\,r\times r}
\]
are positive–semidefinite. \\
For any $\varepsilon>0$ define the strictly positive symbols  
\(
  \kappa_\varepsilon=\kappa+\varepsilon I_r,\;
  \xi_\varepsilon=\xi+\varepsilon I_r.
\)
Since the matrix geometric mean is monotone continuous in each argument, the limit below exists point‑wise {and it is unique} (\cite[Pag.\,3]{ando2004geometric}):

\begin{definition}[Candidate symbol]\label{def:candidate}
For almost every $(x,\theta)$ set
\begin{equation}\label{eq:cand_symbol}
  \widetilde G(\kappa,\xi)(x,\theta)
  :=\lim_{\varepsilon\rightarrow 0}
        G\bigl(\kappa_\varepsilon(x,\theta),\,
               \xi_\varepsilon (x,\theta)\bigr).
\end{equation}
\end{definition}

The matrix $\widetilde G(\kappa,\xi)$ is positive semidefinite and  
\[
  \operatorname{ess \, Ran}\left[\widetilde G(\kappa,\xi)(x,\theta)\right]
  =
  \operatorname{ess \,Ran}(\kappa(x,\theta)) \cap \operatorname{ess \, Ran}(\xi(x,\theta)).
\]
{Given the Definition \ref{def:candidate} of the candidate symbol, we propose the following conjecture:}
\begin{conjecture}[GLT closure under geometric mean]
\label{conj:GM_glt}
Let $\{A_n\}_n$ and $\{B_n\}_n$ be Hermitian positive definite
GLT matrix-sequences with
\[
     \{A_n\}_n\sim_{\mathrm{GLT}}\kappa,
     \qquad
     \{B_n\}_n\sim_{\mathrm{GLT}}\xi.
\]
Then the geometric mean matrix-sequence is again GLT and
\[
     \{\,G(A_n,B_n)\,\}_n \;\sim_{\operatorname{GLT}}\; \widetilde G(\kappa,\xi),
\]
where $\widetilde G(\kappa,\xi)$ is the candidate symbol
defined in~\eqref{eq:cand_symbol}.
\end{conjecture}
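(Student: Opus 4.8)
\medskip\noindent
We outline a possible route towards Conjecture~\ref{conj:GM_glt}. The plan is to run the regularization-and-limit scheme of the proofs of Theorems~\ref{theorem 1} and~\ref{theorem 2}, but now regularizing \emph{both} input sequences and using the general (possibly non-commuting) result of Theorem~\ref{th:two - r,d general} as the engine. For $\varepsilon>0$ set $A_{\bm{n},\varepsilon}=A_{\bm{n}}+\varepsilon I_{r\nu(\bm{n})}$ and $B_{\bm{n},\varepsilon}=B_{\bm{n}}+\varepsilon I_{r\nu(\bm{n})}$; by \textbf{GLT~2} and \textbf{GLT~3} these are GLT matrix-sequences with symbols $\kappa_\varepsilon=\kappa+\varepsilon I_r\geq\varepsilon I_r$ and $\xi_\varepsilon=\xi+\varepsilon I_r\geq\varepsilon I_r$, both invertible a.e., so Theorem~\ref{th:two - r,d general} gives
\[
\{G(A_{\bm{n},\varepsilon},B_{\bm{n},\varepsilon})\}_{\bm{n}}\sim_{\mathrm{GLT}}G(\kappa_\varepsilon,\xi_\varepsilon)\qquad\text{for every fixed }\varepsilon>0.
\]
The conjecture would then follow from Axiom \textbf{GLT~4} (equivalently, Theorem~\ref{th:fundamental acs}), together with \textbf{GLT~1} for the $\sigma,\lambda$ part, once two facts are established: (a) $G(\kappa_\varepsilon,\xi_\varepsilon)\to\widetilde{G}(\kappa,\xi)$ in measure as $\varepsilon\to0^+$; (b) the class $\{\{G(A_{\bm{n},\varepsilon},B_{\bm{n},\varepsilon})\}_{\bm{n}}\}_\varepsilon$ is an a.c.s.\ for $\{G(A_{\bm{n}},B_{\bm{n}})\}_{\bm{n}}$ as $\varepsilon\to0^+$.

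Fact (a) is immediate: by Definition~\ref{def:candidate} the convergence $G(\kappa_\varepsilon(x,\theta),\xi_\varepsilon(x,\theta))\to\widetilde{G}(\kappa,\xi)(x,\theta)$ holds for a.e.\ $(x,\theta)$ on the finite-measure domain $[0,1]^d\times[-\pi,\pi]^d$, whence convergence in measure by Egorov's theorem. Fact (b) is the substance. Since the matrix geometric mean is operator monotone in each argument, $E_{\bm{n},\varepsilon}:=G(A_{\bm{n},\varepsilon},B_{\bm{n},\varepsilon})-G(A_{\bm{n}},B_{\bm{n}})$ is Hermitian positive semidefinite; hence (b) reduces to proving that $\|E_{\bm{n},\varepsilon}\|\le\omega(\varepsilon)$ for all $\bm{n}$ with $\omega(\varepsilon)\to0$ (in which case the rank term in Definition~\ref{def:acs} is void), or, more weakly, that $(r\nu(\bm{n}))^{-1}\mathrm{tr}\,E_{\bm{n},\varepsilon}\to0$ uniformly in $\bm{n}$ (a positive semidefinite matrix with small normalized trace splits into a low-rank block carrying its few large eigenvalues plus a small-norm block, exactly as Definition~\ref{def:acs} demands).

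The main obstacle is thus a \emph{dimension-free quantitative continuity estimate} for the extended (Kubo--Ando) matrix geometric mean at the boundary of the positive semidefinite cone: a modulus $\omega_M$ with $\omega_M(\varepsilon)\to0$ as $\varepsilon\to0^+$ such that $\|G(X+\varepsilon I,Y+\varepsilon I)-G(X,Y)\|\le\omega_M(\varepsilon)$ for \emph{all} Hermitian $0\leq X,Y\leq MI$ and \emph{all} matrix sizes. When the GLT sequences are uniformly bounded in spectral norm, such an estimate is within reach: on the compact set of pairs contained in $[0,MI]$ the geometric mean is jointly continuous, and its modulus of continuity is dimension-independent because $G$ is assembled from functional-calculus operations ($t\mapsto t^{1/2}$, matrix multiplication, inversion away from zero) each of which has a dimension-free modulus of continuity; the crux is passing through the boundary, where the formula $X^{1/2}(X^{-1/2}YX^{-1/2})^{1/2}X^{1/2}$ degenerates and one must argue via the operator-mean definition, its monotone-limit continuity, or an integral representation of $G$. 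As a concrete sanity check, for $r=1$ (and more generally when $\kappa,\xi$ commute, where $\widetilde{G}(\kappa,\xi)=(\kappa\xi)^{1/2}$ and the conjecture subsumes Theorems~\ref{theorem 1} and~\ref{theorem 2}) one has the elementary dimension-free bound $|\sqrt{(a+\varepsilon)(b+\varepsilon)}-\sqrt{ab}|\le\sqrt{\varepsilon}\,\sqrt{a+b+\varepsilon}$, so $\omega_M(\varepsilon)=\sqrt{\varepsilon}\,\sqrt{2M+\varepsilon}$ works.

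For unbounded GLT sequences a further reduction is needed: one first spectrally truncates, replacing $A_{\bm{n}}$ by $f_M(A_{\bm{n}})$ with $f_M(t)=\min(t,M)$ — a GLT sequence with symbol $f_M(\kappa)$ by \textbf{GLT~6}, and an a.c.s.\ for $\{A_{\bm{n}}\}_{\bm{n}}$ as $M\to\infty$ by \textbf{GLT~1} (the discarded part has vanishing normalized rank, since $\kappa$ is finite a.e.) — and similarly for $B_{\bm{n}}$; to conclude one then needs the a.c.s.-stability of $G$ under \emph{low-rank} input perturbations, a second genuinely delicate point, since it is not evident that a rank-$k$ perturbation of an input yields a $(\text{low-rank}+\text{small-norm})$ perturbation of $G(A_{\bm{n}},B_{\bm{n}})$. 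In summary, the conjecture should follow from the regularization scheme above once these two matrix-analytic facts — a dimension-free boundary modulus of continuity for $G$, and a.c.s.-stability of $G$ under low-rank input perturbations — are proved; this is, in our view, precisely what currently separates Conjecture~\ref{conj:GM_glt} from a theorem.
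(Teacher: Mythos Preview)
Your proposal is not a proof, and you correctly do not claim it is one; you explicitly flag the missing ingredients. This is appropriate, because the statement in question is a \emph{conjecture} in the paper, not a theorem: the paper itself offers only a heuristic justification and states that ``a full proof when $\kappa$ and $\xi$ do not commute point-wise remains open.''

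Your outline follows essentially the same route as the paper's heuristic: regularize both inputs by $\varepsilon I$, invoke Theorem~\ref{th:two - r,d general} on the regularized pair to obtain $\{G(A_{\bm{n},\varepsilon},B_{\bm{n},\varepsilon})\}_{\bm{n}}\sim_{\mathrm{GLT}}G(\kappa_\varepsilon,\xi_\varepsilon)$, and then attempt to pass to the limit via \textbf{GLT~4}. The paper's heuristic says exactly this and appeals vaguely to ``properties of monotone functions and the matrix geometric mean axioms'' for the limit step. Your analysis is considerably sharper: you cleanly separate the easy ingredient (convergence in measure of the symbols, which is built into Definition~\ref{def:candidate}) from the hard one (the a.c.s.\ convergence $\{G(A_{\bm{n},\varepsilon},B_{\bm{n},\varepsilon})\}_{\bm{n}}\to\{G(A_{\bm{n}},B_{\bm{n}})\}_{\bm{n}}$), and you name the latter precisely as the need for a \emph{dimension-free} modulus of continuity for the Kubo--Ando geometric mean at the boundary of the positive semidefinite cone. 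You also identify a second genuine obstacle --- a.c.s.-stability of $G$ under low-rank input perturbations --- that the paper does not mention at all and that would indeed be needed to handle unbounded GLT sequences via spectral truncation.

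In short: your route coincides with the paper's, but your diagnosis of what is missing is more explicit and more complete than the paper's own. Neither you nor the paper closes the gap, and you have accurately located where it lies.
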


The heuristic justification of the conjecture is the following. For each fixed $\varepsilon>0$ the shifted sequences
$\{A_n+\varepsilon I\}_n$ and $\{B_n+\varepsilon I\}_n$ are GLT sequences symbols
$\kappa_\varepsilon$ and $\xi_\varepsilon$ and their geometric mean sequences satisfy
\(
  \{\,G(A_n+\varepsilon I,B_n+\varepsilon I)\,\}_n
  \sim_{\operatorname{GLT}}
  G(\kappa_\varepsilon,\xi_\varepsilon).
\)
{Properties of monotone functions and the matrix geometric mean axioms described in \cite{ando2004geometric}}, combined with closeness of the GLT *-algebra (Axiom \textbf{GLT 4}) under a.c.s convergence, suggest that letting $\varepsilon\rightarrow0$ the limit of $ \{\,G(A_n+\varepsilon I,B_n+\varepsilon I)\,\}_n$ is well defined in the {a.c.s topology and this limit is  $ \{\,G(A_n,B_n)\,\}_n$, up to zero-distributed perturbations }, leading to the GLT algebraic part of Conjecture~\ref{conj:GM_glt}, a full proof when
$\kappa$ and $\xi$ do not commute point‑wise remains open. Meanwhile, all
numerical evidence below supports the conjecture on the spectral distribution part of our hypothesis. \\ 

\subsubsection*{Candidate symbols predicted by Conjecture~\ref{conj:GM_glt}}

For the four test pairs introduced in Section \ref{sec:NumSetup}, Conjecture~\ref{conj:GM_glt} predicts
the following GLT symbols for $\{G(A_n,B_n)\}_n$.
Throughout
\(
  \mathbf 1=(1,1,1)^{\mathsf *},\;
  J=\mathbf 1\,\mathbf 1^{\mathsf *}\in\mathbb{C}^{3\times3}.
\)

\paragraph{Case 1, Example 1.}
\[
  \widetilde G(\kappa,\xi)(x,\theta)=0_{2\times2},
\]

\paragraph{Case 1, Example 2.}
\[
  \widetilde G(\kappa,\xi)(x,\theta)=
  \chi_{[-\frac14,\frac14]}(\theta)\;
  \otimes C,
  \qquad
  C \; =\; 
  \frac{1}{6^{1/4}\sqrt{\,2+\sqrt6\,}}
  \begin{bmatrix}
     2\sqrt2+3\sqrt3 & \sqrt2+\sqrt3\\[4pt]
     \sqrt2+\sqrt3   & 2\sqrt2+\sqrt3
  \end{bmatrix}.
\]

\paragraph{Case 2, Example 1.}
\[
  \widetilde G(\kappa,\xi)(x,\theta)=0_{2\times2} 
\]

\paragraph{Case 2, Example 2.}
\[
  \widetilde G(\kappa,\eta)(x,\theta)=
  \sqrt{\,\bigl(2+\cos\theta\bigr)\,\bigl(3+\cos\theta\bigr)\,h(x)\,k(x)}\;
  \otimes J.
\]

\subsubsection{Numerical verification of the Conjecture}

Let
\[
  G_n := G(A_n,B_n)\in\mathbb{C}^{d_n\times d_n},
  \qquad d_n=\text{size}(G_n).
\]
We test whether the empirical eigenvalue distribution of $\{G_n\}_n$
converges to the distribution induced by the
candidate symbol
$\widetilde G(\kappa,\xi)(x,\theta)$ defined in
Section~\ref{subsubsec:GM_conjecture}.
The comparison follows the standard rearrangement strategy.

\begin{itemize}\setlength\itemsep{6pt}

\item[\textbf{1.}] \textbf{Sampling the symbol.}
      Evaluate $\widetilde G(\kappa,\xi)$ on a tensor grid
      $\{x_j\}_{j=1}^{M_x}\!\subset[0,1]$,
      $\{\theta_i\}_{i=1}^{M_\theta}\!\subset[-\pi,\pi]$ with a total number of
      $M_x \times M_\theta\simeq2000$ samples.
      For each point $(x_j,\theta_i)$ compute the
      $r$ eigenvalues of the function; then merge and sort in non-decreasing order all the computed values.
      This yields the empirical quantile function of the eigenvalues.

\item[\textbf{2.}] \textbf{Eigenvalue computation.}
      Matrices $A_n,B_n$ are generated for
      $n\in\{40,80,160,320\}$, and eigenvalues are ordered in non-decreasing order, obtaining the quantile spectral distribution function of the $\{G_n\}_n$. \\
      We also record
      $\lambda_{\min}(G_n)$, $\lambda_{\max}(G_n)$ and the condition
      number~$\mathrm{cond}_2(G_n)$ for later stability analysis.

\item[\textbf{3.}] \textbf{Quantile plot matching.}
      For each $n$ we plot the sorted eigenvalues
      $\bigl(\lambda_i(G_n)\bigr)$ against the corresponding rearranged sampled values of $\widetilde G$.
      A superposition of the two plots as $n$ grows
      indicates, experimentally, asymptotic convergence of the empirical spectral distribution
      of $\{G_n\}_n$ to that of the symbol.

\item[\textbf{4.}] \textbf{Support analysis.}
      To estimate the measure of the zero set
      $\{(x,\theta):\widetilde G=0\}$ we count the fraction of eigenvalues below the threshold $0.1$ compared to the matrix size; the value is an estimation the complement of the predicted measure of the support
      as $n$ grows.
\end{itemize}

\medskip
\begin{remark}[Measure‑theoretic meaning of the rearrangement test]
\label{rearrangement}
As anticipated implicitly, the “sorted–eigenvalue versus sorted–symbol’’ comparison has a precise
measure‑theoretic foundation based on quantile approximation theory \cite{Bogoya2016}. For completeness, we report the construction in \cite{Bogoya2016} for the
unilevel setting $d=1$; the multilevel case is analogous.

Let $r$ denote the size of the matrix
$\widetilde G(\kappa,\xi)(x,\theta)$ and put
$D=[0,1]\times[-\pi,\pi]$ (so $\mu(D)=2\pi$).
Define the probability space defined by the triple
\[
  (\Omega,\mathcal F,\mathbb P),\qquad
  \Omega=D\times\{1,\dots,m\},\qquad
  \mathcal F=\mathcal B(D)\otimes2^{\{1,\dots,m\}},
\]
Here, $\mathcal B(D)$ denotes the Borel $\sigma$‑algebra on $D$, and
$2^{\{1,\dots,m\}}$ denotes the power set of $\{1,\dots,m\}$,
with product measure
\(
  \mathbb P(A\times B)=\dfrac{\mu(A)}{2\pi}\,\dfrac{|B|}{{r}}.
\)\\
Introduce the random variable
\[
  X(x,\theta,i):=\lambda_i\!\bigl(\widetilde G(\kappa,\xi)(x,\theta)\bigr),
  \qquad (x,\theta,i)\in\Omega,
\]
where the eigenvalues are ordered non‑decreasingly.
The quantile function of $X$ is precisely the \emph{non‑decreasing
rearrangement} $\widetilde G^{\dagger}$ of the matrix-valued symbol.\\
Because spectral distributions are defined only up to
measure-preserving rearrangements,
\[
  \{G_n\}_n\sim_{\lambda} \widetilde G
  \;\Longrightarrow\;
  \{G_n\}_n\sim_{\lambda} \widetilde G^{\dagger},
\]
so verifying convergence to the quantile $\widetilde G^{\dagger}$ is equivalent,
and numerically simpler, than verifying convergence to the symbol
$\widetilde G(\kappa,\xi)$ itself. \\
Further, whenever $\widetilde G^{\dagger}$ is continuous at $t\in(0,1)$, the sorted
eigenvalues satisfy
\(
   \lambda_{\lceil t d_n\rceil}(G_n)\to \widetilde G^{\dagger}(t)
\)
as $n\to\infty$, and the empirical proportion of eigenvalues below a small threshold
$t_0$ (we take $t_0=0.1$) converges to
\(
  \mathbb P[X\le t_0],
\)
therefore it can be used as an estimate of the measure of the rank‑deficient subset
$\{(x,\theta):\widetilde G(\kappa,\xi)(x,\theta)=0\}$.
\end{remark}

\subsubsection{Numerical results}
\label{sec:numerical-results}

In this section, we compare the sorted eigenvalues of each geometric mean matrix
\(G(A_n,B_n)\) with the rearranged eigenvalue distribution predicted by the
candidate symbol \(\widetilde{G}(\kappa,\xi)\). Plots are provided for each block size.

\paragraph{Case 1.}
Figures~\ref{fig:case1_ex1}--\ref{fig:case1_ex2} display the ordered
eigenvalues of \(G(A_n,B_n)\) (colored markers) together with the ordered
samples of \(\widetilde{G}(\kappa,\xi)\) (solid red line).

\smallskip
\emph{Example 1.}
Because the supports of \(\kappa\) and \(\xi\) are essentially disjoint,
\(\widetilde{G}(\kappa,\xi)\equiv 0\).
Most eigenvalues match the zero line, showing convergence to the zero distribution. Only a small number of positive outliers remain above the line. For small values of $n$, the eigenvalue plot appears to be governed by a GLT {momentary} symbol that, when rearranged, follows a power law that rapidly collapses to the zero function as \(n \to \infty\).

\smallskip
\emph{Example 2.}
Here, the supports of \(\kappa\) and \(\xi\) intersect on \(\theta \in [-0.25,0.25]\). As a result, \(\widetilde{G}(\kappa,\xi)\) takes two values: zero outside the overlap and a positive constant matrix inside it. The eigenvalues reflect exactly this behavior: they form a long plateau at zero, followed by two shorter clusters at the positive eigenvalues determined by \(\widetilde{G}(\kappa,\xi)\). As we will see, the length of this zero plateau matches the respective Lebesgue measure of the complement of the support of \(\widetilde G\).

\begin{figure}[ht]
  \centering
  \begin{subfigure}[t]{0.49\textwidth}
    \centering
    \includegraphics[width=\textwidth]{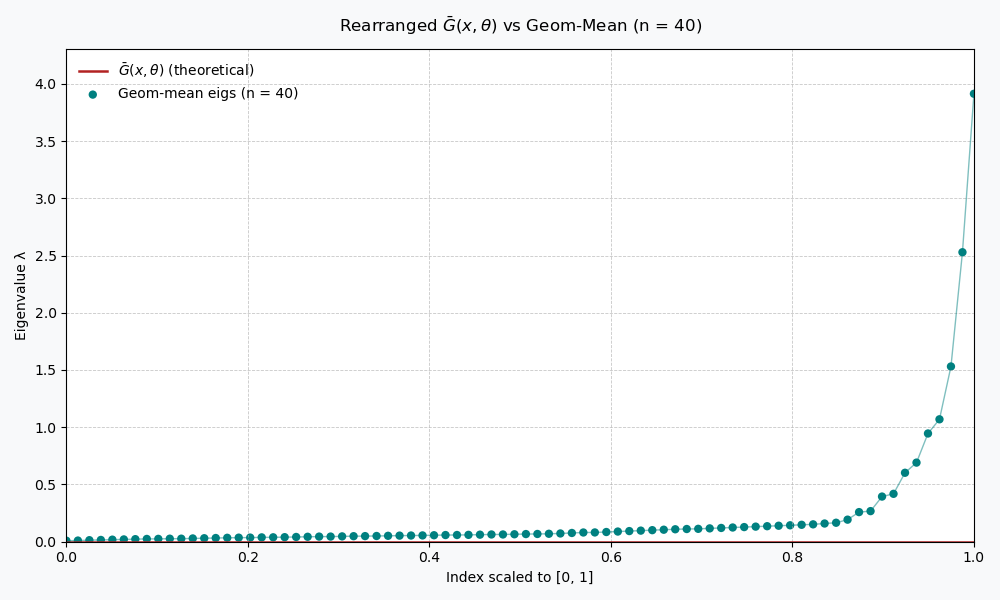}
  \end{subfigure}
  \hfill
  \begin{subfigure}[t]{0.49\textwidth}
    \centering
    \includegraphics[width=\textwidth]{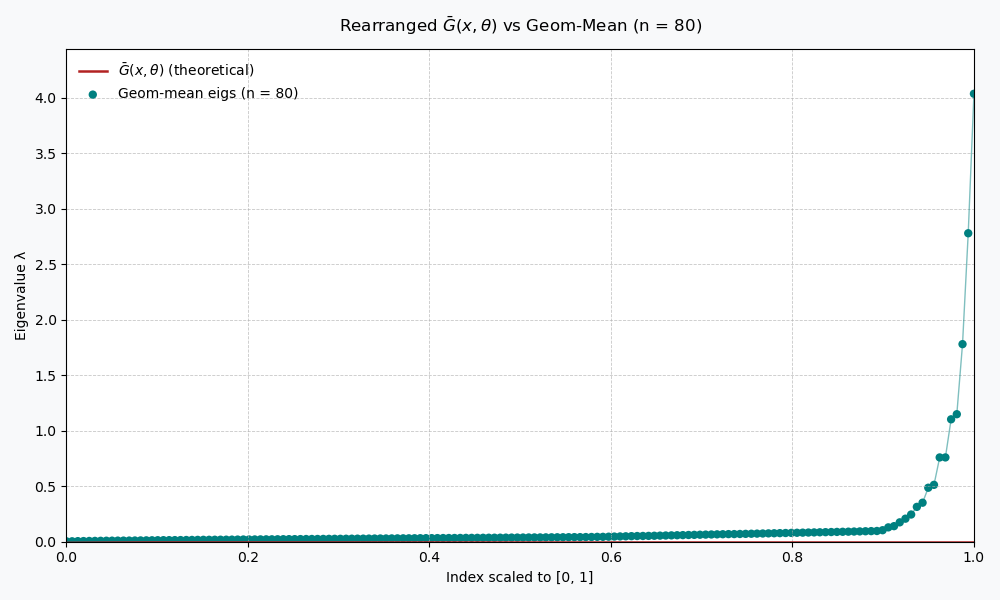}
  \end{subfigure}
  \\[-1em]  

  \begin{subfigure}[t]{0.49\textwidth}
    \centering
    \includegraphics[width=\textwidth]{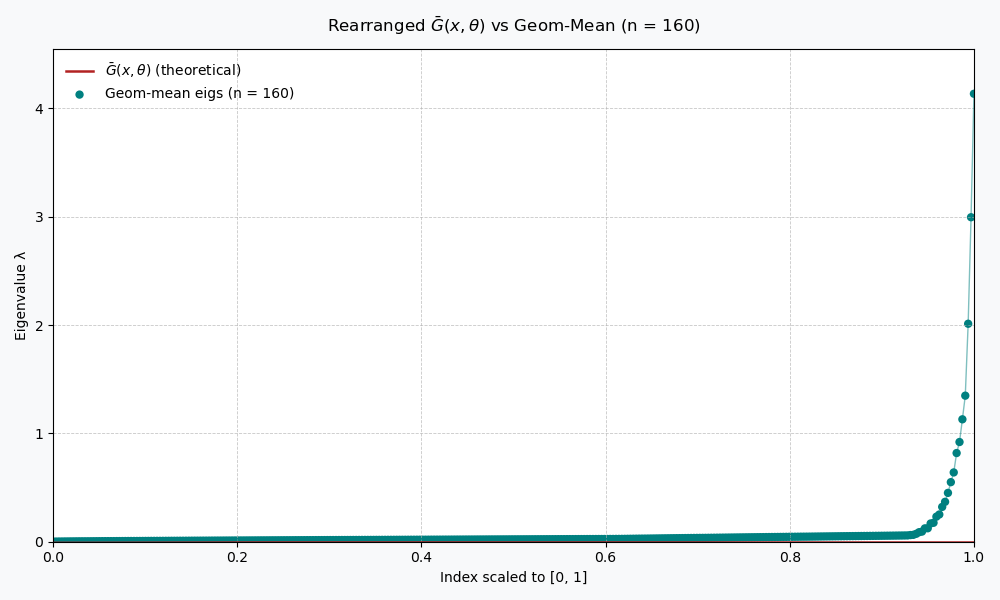}
  \end{subfigure}
  \hfill
  \begin{subfigure}[t]{0.49\textwidth}
    \centering
    \includegraphics[width=\textwidth]{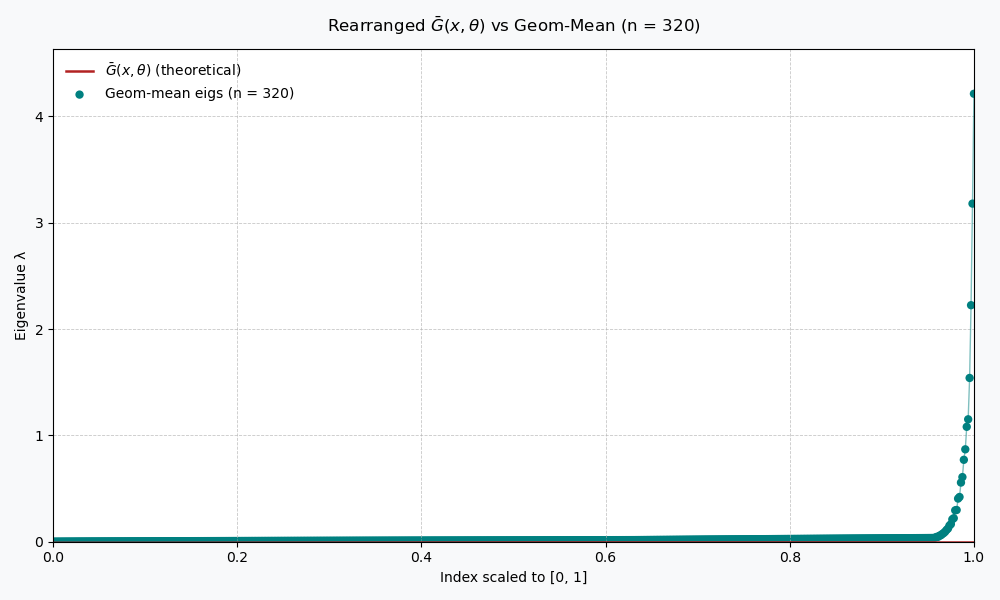}
  \end{subfigure}
\caption{\small{\textbf{Case 1 — Example 1.}
Sorted eigenvalues of \(G(A_n,B_n)\) (colored markers; \(n=40,80,160,320\))
versus the rearranged distribution of the symbol
\(\widetilde G(\kappa,\xi)\) (solid red line)}}.
\label{fig:case1_ex1}
\end{figure}

\begin{figure}[ht]
  \centering
  \begin{subfigure}[t]{0.49\textwidth}
    \centering
    \includegraphics[width=\textwidth]{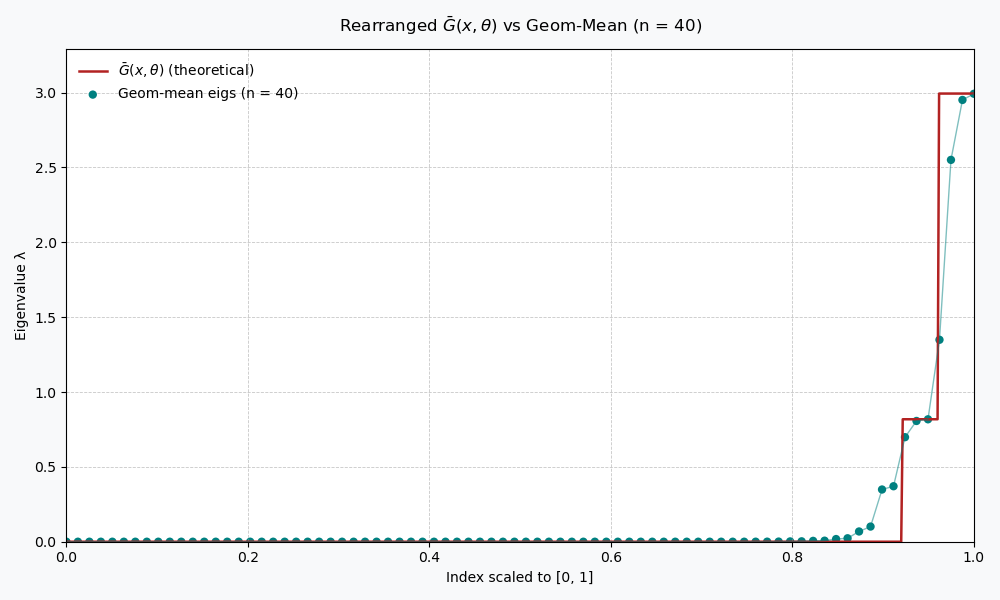}
  \end{subfigure}
  \hfill
  \begin{subfigure}[t]{0.49\textwidth}
    \centering
    \includegraphics[width=\textwidth]{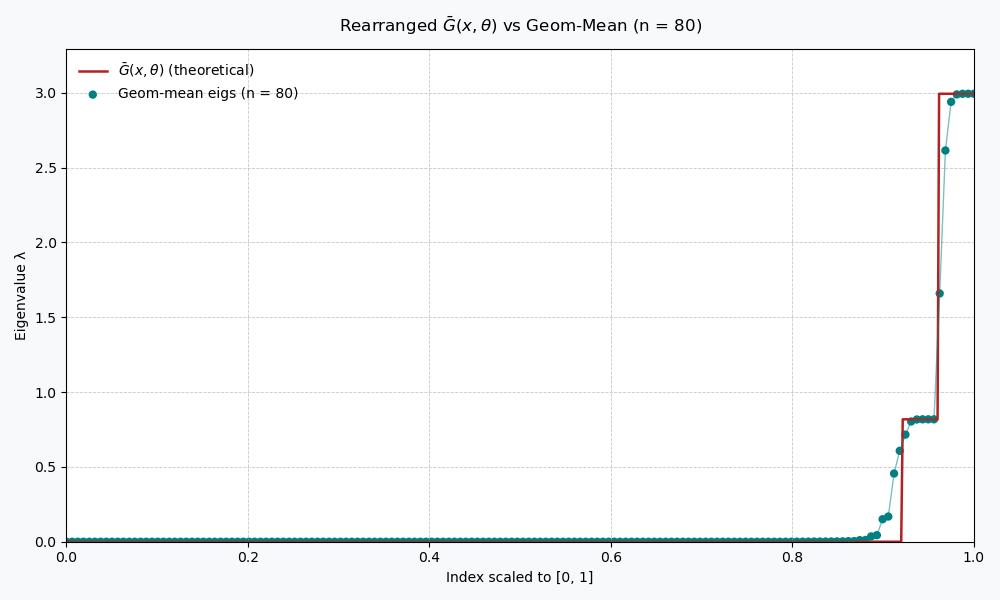}
  \end{subfigure}

  \begin{subfigure}[t]{0.49\textwidth}
    \centering
    \includegraphics[width=\textwidth]{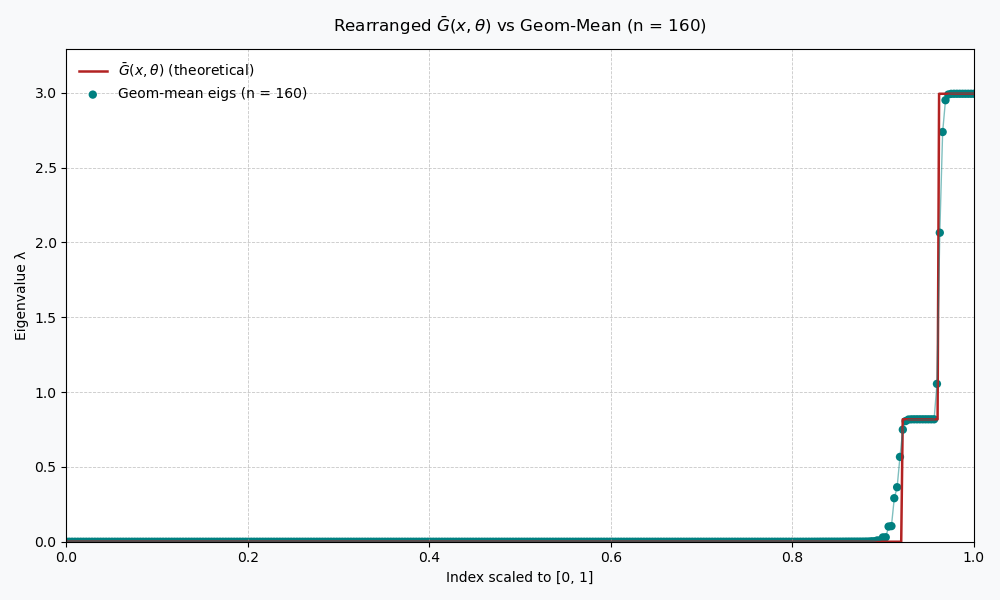}
  \end{subfigure}
  \hfill
  \begin{subfigure}[t]{0.49\textwidth}
    \centering
    \includegraphics[width=\textwidth]{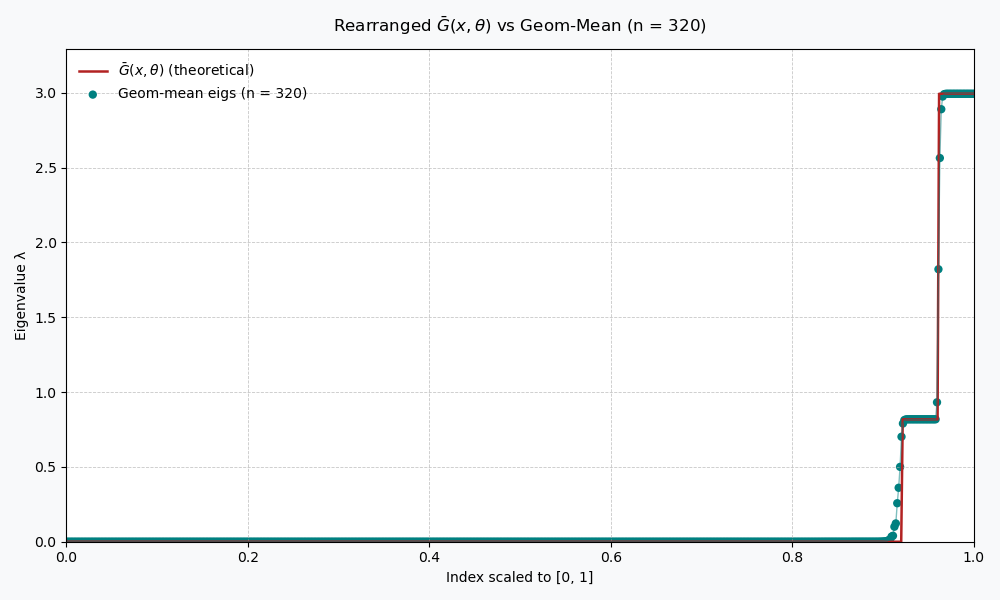}
  \end{subfigure}

  \caption{\small{\textbf{Case 1 — Example 2.}
Comparison of the ordered eigenvalues of \(G(A_n,B_n)\)
(colored markers) with the rearranged values of
\(\widetilde G(\kappa,\xi)\) (solid red line) for
\(n=40,80,160,320\).}}
  \label{fig:case1_ex2}
\end{figure}
\FloatBarrier

\paragraph{Case 2.}
Figures~\ref{fig:case2_ex1}--\ref{fig:case2_ex2} repeat the experiment for
symbols that are rank-deficient on sets of full measure.

\smallskip
\emph{Example 1.}
Because the supports are disjoint, we have \(\widetilde{G}(\kappa,\xi)\equiv 0\).
The eigenvalues converge to the zero symbol. Despite the momentary symbol may seem to cause a rougher perturbation, we remark that, unlike in Case 1 - Example 1, this perturbation does not result in proper outliers since all perturbed eigenvalues converge to zero. In practice, we observe a better convergence.

\smallskip
\emph{Example 2.}
Here, the intersection set has rank one, leading to exactly one positive eigenvalue
in \(\widetilde{G}(\kappa,\xi)\).
The plot shows convergence despite the rank deficiency of the support. Because the norm of the diagonal perturbation introduced in \ref{eq:case2_ex2} decays slowly, a momentary perturbation is still observed even in this non-zero case.

\begin{figure}[ht]
\captionsetup{font=small,skip=4pt}
  \centering
  \begin{subfigure}[t]{0.49\textwidth}
    \centering
    \includegraphics[width=\textwidth]{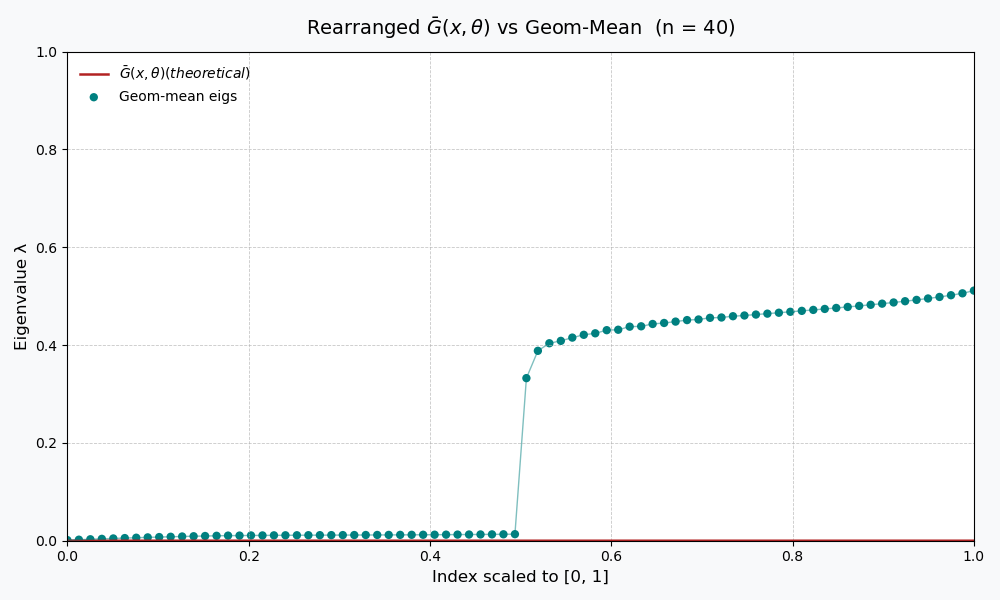}
  \end{subfigure}
  \hfill
  \begin{subfigure}[t]{0.49\textwidth}
    \centering
    \includegraphics[width=\textwidth]{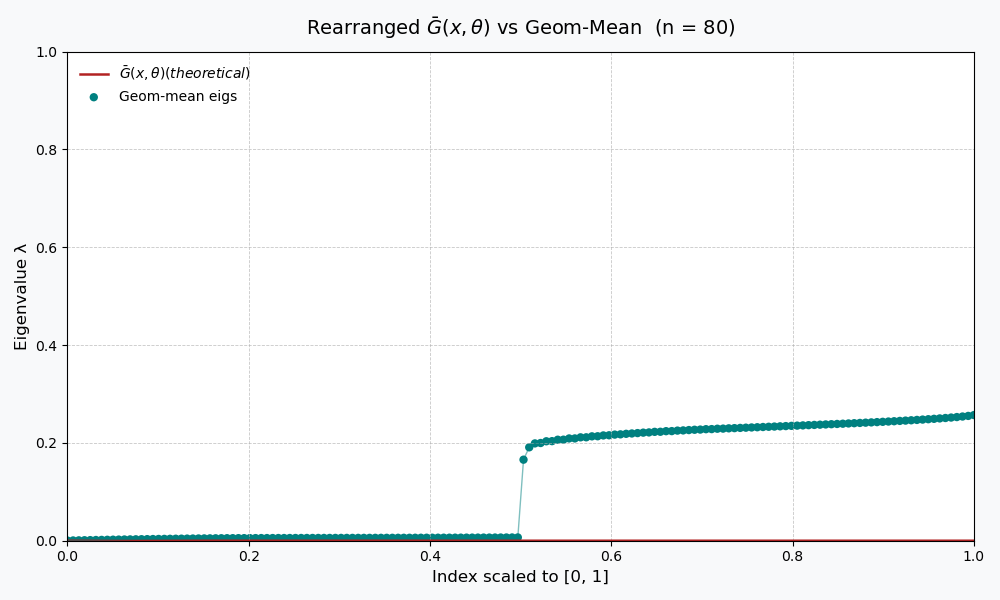}
  \end{subfigure}
  \\[-1em]  

  \begin{subfigure}[t]{0.49\textwidth}
    \centering
    \includegraphics[width=\textwidth]{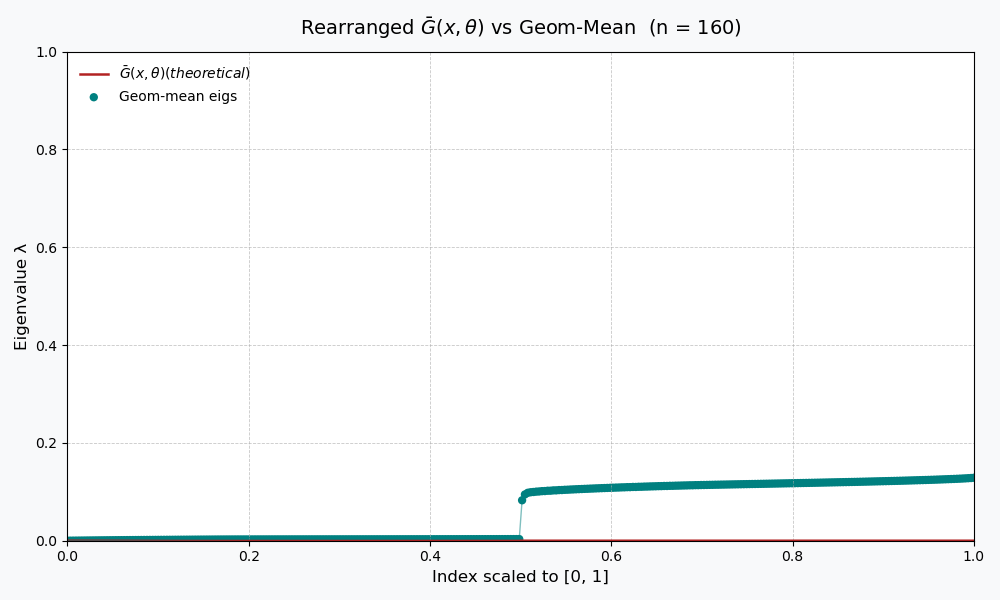}
  \end{subfigure}
  \hfill
  \begin{subfigure}[t]{0.49\textwidth}
    \centering
    \includegraphics[width=\textwidth]{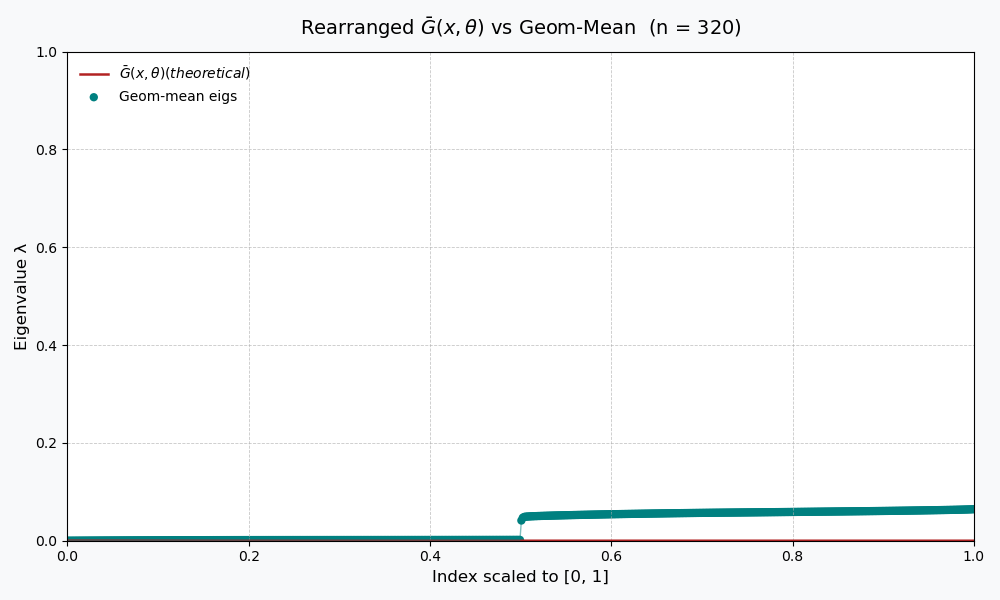}
  \end{subfigure}

  \caption{\small{\textbf{Case 2, Example 1.} Sorted eigenvalues of \(G(A_n,B_n)\) (colored markers) versus the rearranged eigenvalue distribution predicted by \(G(\kappa,\xi)\) (solid red line), for
\(n=40,80,160,320\).}}
  \label{fig:case2_ex1}
\end{figure}

\begin{figure}[ht]
  \centering
  \begin{subfigure}[t]{0.49\textwidth}
    \centering
    \includegraphics[width=\textwidth]{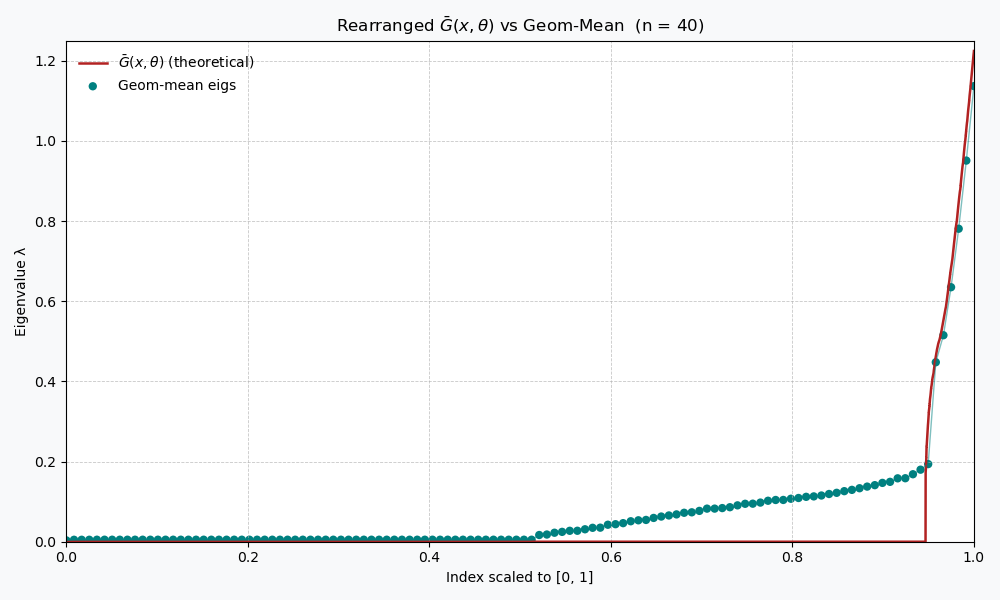}
  \end{subfigure}
  \hfill
  \begin{subfigure}[t]{0.49\textwidth}
    \centering
    \includegraphics[width=\textwidth]{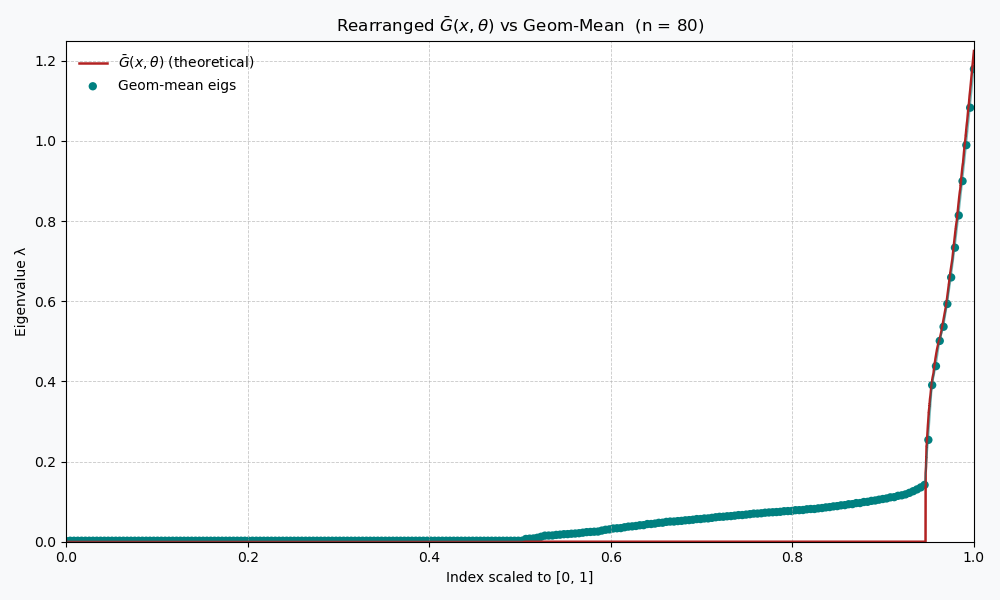}
  \end{subfigure}
  \\[-1em]

  \begin{subfigure}[t]{0.49\textwidth}
    \centering
    \includegraphics[width=\textwidth]{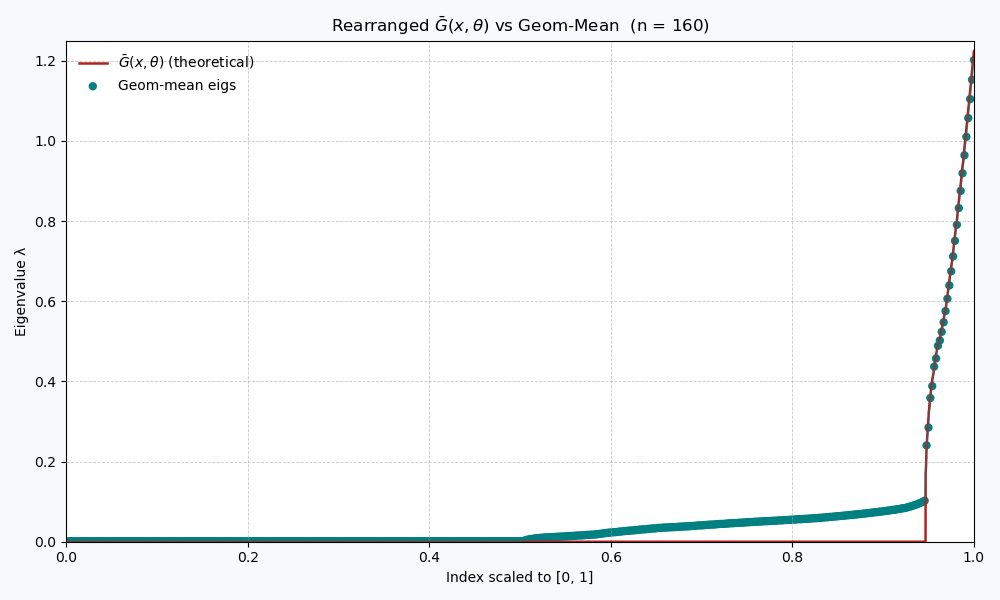}
  \end{subfigure}
  \hfill
  \begin{subfigure}[t]{0.49\textwidth}
    \centering
    \includegraphics[width=\textwidth]{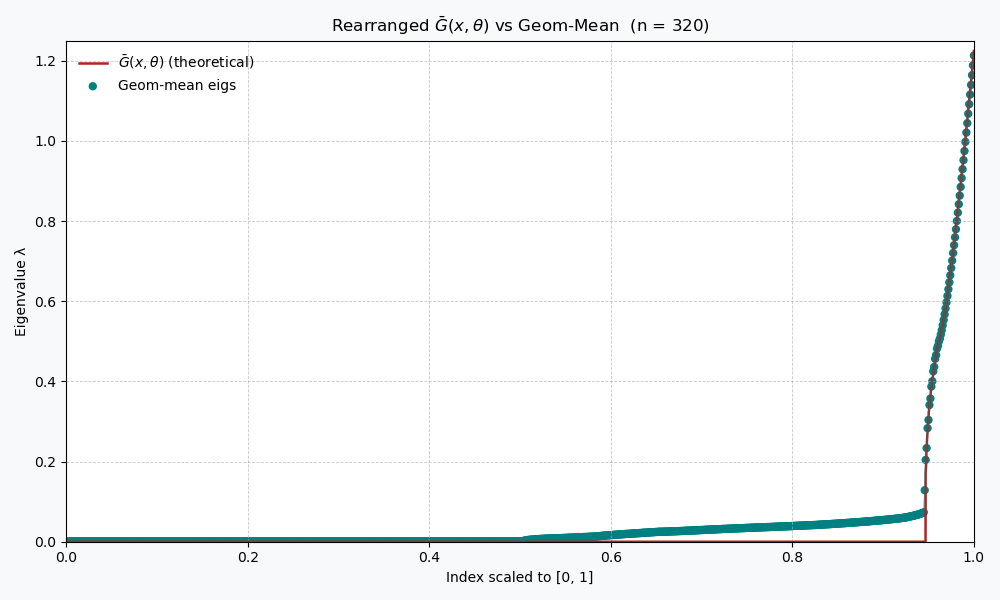}
  \end{subfigure}

  \caption{\small{\textbf{Case 2, Example 2.} Sorted eigenvalues of \(G(A_n,B_n)\) (colored markers) versus the rearranged eigenvalue distribution from \(G(\kappa,\xi)\) (solid red line), for
\(n=40,80,160,320\).}}
  \label{fig:case2_ex2}
\end{figure}
\FloatBarrier 
\paragraph{Extremal eigenvalues.}
For each block size \(n\) we report
\[
  \lambda_{\min}(G_n), \qquad
  \lambda_{\max}(G_n), \qquad
  \mathrm{cond}_2(G_n)=\frac{\lambda_{\max}(G_n)}{\lambda_{\min}(G_n)},
\]
together with the essential infimum and supremum of the conjectured
symbol, \(\widetilde G_{\min}\) and \(\widetilde G_{\max}\).
In this way, we can observe how rapidly the minimum eigenvalue decays and, in most cases, how the maximum eigenvalues converge to the essential supremum of the spectral symbol.
This provides a clear measure of the conditioning behaviour as \(n\) grows.

\medskip\noindent

\begin{table}[H]
  \centering
  \caption{Extremal eigenvalues and condition numbers for
           \textbf{Case 1 — Example 1} (symbol zero almost everywhere).}
  \label{tab:case1_ex1_range}
  \begin{tabular}{|c
                  |S[table-format=1.4]
                  |S[table-format=1.4]
                  |S[table-format=1.4e+2]
                  |S[table-format=1.8]
                  |S[table-format=1.4e+2]|}
    \hline
    {$n$} & {\(\widetilde G_{\min}\)}
           & {\(\widetilde G_{\max}\)}
           & {Min.\ eig.}
           & {Max.\ eig.}
           & {\(\mathrm{cond}_2(G_n)\)}\\
    \hline
    40  & 0.0000 & 0.0000 & 8.7142e-03 & 3.91278029 & 4.490e+02 \\
    80  & 0.0000 & 0.0000 & 3.6849e-03 & 4.03656685 & 1.095e+03 \\
    160 & 0.0000 & 0.0000 & 1.4929e-03 & 4.13399129 & 2.769e+03 \\
    320 & 0.0000 & 0.0000 & 5.8335e-04 & 4.21189805 & 7.220e+03 \\
    \hline
  \end{tabular}
\end{table}

\paragraph{Case 1 — Example 2.}
Here \(\widetilde G_{\max}=2.9939\) is strictly positive. In this example we observe a fast convergence  of \(\lambda_{\max}(G_n)\), and an exponential decaying of
\(\lambda_{\min}(G_n)\), and this clearly causes the escalation of
\(\mathrm{cond}_2(G_n)\) reaching
\(\mathcal{O}(10^{8})\) at \(n=320\).

\begin{table}[H]
  \centering
  \caption{Extremal eigenvalues and condition numbers for
           \textbf{Case 1 — Example 2} (symbol positive on an overlap set).}
  \label{tab:case1_ex2_range}
  \begin{tabular}{|c
                  |S[table-format=1.4]
                  |S[table-format=1.8]
                  |S[table-format=1.4e+2]
                  |S[table-format=1.8]
                  |S[table-format=1.4e+2]|}
    \hline
     {$n$} & {\(\widetilde G_{\min}\)}
            & {\(\widetilde G_{\max}\)}
            & {Min.\ eig.}
            & {Max.\ eig.}
            & {\(\mathrm{cond}_2(G_n)\)}\\
    \hline
    40  & 0.0000 & 2.99393066 & 1.5625e-05 & 2.99257415 & 1.915e+05 \\
    80  & 0.0000 & 2.99393066 & 1.9531e-06 & 2.99393280 & 1.533e+06 \\
    160 & 0.0000 & 2.99393066 & 2.4414e-07 & 2.99393094 & 1.226e+07 \\
    320 & 0.0000 & 2.99393066 & 3.0518e-08 & 2.99393070 & 9.811e+07 \\
    \hline
  \end{tabular}
\end{table}


\medskip\noindent
\smallskip
\paragraph{Case 2 — Example 1.}
The minimum eigenvalue seems to decay to 0 at a quadratic rate, while the linear convergence rate to 0 of the largest eigenvalue is imposed by the momentary symbol. As a consequence, the growth of
\(\mathrm{cond}_2(G_n)\) is mild.
\medskip
\smallskip

\begin{table}[H]
  \centering
  \caption{Extremal eigenvalues and condition numbers for
           \textbf{Case 2 — Example 1} (candidate symbol
           \(\widetilde G\equiv0\)).}
  \label{tab:case2_ex1_range}
  \begin{tabular}{|c
                  |S[table-format=1.2]
                  |S[table-format=1.2]
                  |S[table-format=1.2e+2]
                  |S[table-format=1.8] 
                  |S[table-format=1.2e+2]|}
    \hline
     {$n$} & {\(\widetilde G_{\min}\)}
            & {\(\widetilde G_{\max}\)}
            & {Min.\ eig.}
            & {Max.\ eig.}
            & {\(\mathrm{cond}_2(G_n)\)}\\
    \hline
    40  & 0.00 & 0.00 & 1.2615e-03 & 0.51134401 & 4.0536e+02 \\
    80  & 0.00 & 0.00 & 3.2103e-04 & 0.25691974 & 8.0029e+02 \\
    160 & 0.00 & 0.00 & 8.0993e-05 & 0.12877171 & 1.5899e+03 \\
    320 & 0.00 & 0.00 & 2.0342e-05 & 0.06447364 & 3.1695e+03 \\
    \hline
  \end{tabular}
\end{table}
\medskip
\smallskip

\paragraph{Case 2 — Example 2.}
Here \(\widetilde G_{\max}\simeq1.22\).
\(\lambda_{\max}(G_n)\) converges to this value and is included in the range of $\widetilde G$, whereas
\(\lambda_{\min}(G_n)\) decays linearly. This better conditioning compared to Example 2 of the first case is reasonable, since the positive perturbations we introduced to the Toeplitz matrices of this example (Equation \ref{eq:case2_ex2} decay in norm more slowly than the ones in Case 1 - Example 1. 

\medskip

\begin{table}[H]
  \centering
  \caption{Extremal eigenvalues and condition numbers for
           \textbf{Case 2 — Example 2} (symbol positive on an overlap set).}
  \label{tab:case2_ex2_range}
  \begin{tabular}{|c
                  |S[table-format=1.2]
                  |S[table-format=1.8]
                  |S[table-format=1.2e+2]
                  |S[table-format=1.8]
                  |S[table-format=1.2e+2]|}
    \hline
     {$n$} & {\(\widetilde G_{\min}\)}
            & {\(\widetilde G_{\max}\)}
            & {Min.\ eig.}
            & {Max.\ eig.}
            & {\(\mathrm{cond}_2(G_n)\)}\\
    \hline
    40  & 0.00 & 1.22418242 & 3.24e-03 & 1.13658304 & 3.50e+02 \\
    80  & 0.00 & 1.22418242 & 1.46e-03 & 1.17911634 & 8.10e+02 \\
    160 & 0.00 & 1.22418242 & 5.65e-04 & 1.20156562 & 2.13e+03 \\
    320 & 0.00 & 1.22418242 & 8.19e-05 & 1.21306699 & 1.48e+04 \\
    \hline
  \end{tabular}
\end{table}

\paragraph{Zero‑related statistics.}
For each block size \(n\) we report the fraction of eigenvalues whose
modulus does not exceed the fixed cutoff \(0.1\); see
Tables~\ref{tab:case1_ex1_zero}–\ref{tab:case2_ex2_zero}.
Each table lists the empirical proportion
(\emph{Prop.\(\le0.1\)}), the theoretical measure of the zero set of the
rearranged symbol (\emph{Target}), and the absolute difference (\emph{Error}).

\smallskip
\paragraph{Case 1 — Example 1.}
The conjectured symbol is the zero function, so the target measure is~1.\\
The empirical value increases monotonically with~\(n\), yet within a small error.
Because the number of outliers is at most \(o(n)\), we expect that the proportion will converge to 1 as $n \to \infty $.

\paragraph{Case 1 — Example 2.}
Here, the zero set of the rearranged spectral symbol has measure
\[
  1-\frac{1}{4\pi}\approx 0.9204.
\]%
The measured proportions appear to approach this value with an
\(\mathcal O(n^{-1})\) decay, the error falling below \(10^{-3}\) by
\(n=160\).

\medskip
\paragraph{Case 2 — Example 1.}
Here the expected spectral symbol is zero, but the momentary symbol perturbation,
as observed in plot~\ref{fig:case2_ex1}, produces an apparent
plateau \(0.5\) up to \(n=320\), where the proportion jumps to~1 and the measured error vanishes.
Since the momentary perturbation does not create proper outliers, as observed in Case 1 Ex.\,2, a faster convergence can reasonably be expected in this case.

\paragraph{Case 2 — Example 2.}
In this case, the complement of the support of the rearranged distribution has measure
\[
  \frac{17}{18}\approx 0.9444.
\]
The empirical measure of the zero cluster converges to this value with an error that appears to follow the same power law observed in Case 2 Ex.\,1.

\medskip
\begin{table}[H]
  \centering
  \caption{Case 1, Example 1: proportion of eigenvalues below \(0.1\); target \(0\).}
  \label{tab:case1_ex1_zero}
  \begin{tabular}{|c|S|S|S|}
    \hline
    {$n$} & {Prop.\(\le0.1\)} & {Target} & {Error} \\
    \hline
    40  & 0.6375 & 1.0000 & 0.3625 \\
    80  & 0.8938 & 1.0000 & 0.1062 \\
    160 & 0.9438 & 1.0000 & 0.0562 \\
    320 & 0.9688 & 1.0000 & 0.0312 \\
    \hline
  \end{tabular}
\end{table}

\begin{table}[H]
  \centering
  \caption{Case 1, Example 2: proportion of eigenvalues below \(0.1\);
           target \(1-\frac{1}{4\pi}\approx0.92042\).}
  \label{tab:case1_ex2_zero}
  \begin{tabular}{|c|S|S|S|}
    \hline
    {$n$} & {Prop.\(\le0.1\)} & {Target} & {Error} \\
    \hline
    40  & 0.8750 & 0.9204 & 0.0454 \\
    80  & 0.8938 & 0.9204 & 0.0266 \\
    160 & 0.9031 & 0.9204 & 0.0173 \\
    320 & 0.9109 & 0.9204 & 0.0095 \\
    \hline
  \end{tabular}
\end{table}

\begin{table}[H]
  \centering
  \caption{Case 2, Example 1: proportion of eigenvalues below \(0.1\); target \(0\).}
  \label{tab:case2_ex1_zero}
  \begin{tabular}{|c|S|S|S|}
    \hline
    {$n$} & {Prop.\(\le0.1\)} & {Target} & {Error} \\
    \hline
    40  & 0.5000 & 1.0000 & 0.5000 \\
    80  & 0.5000 & 1.0000 & 0.5000 \\
    160 & 0.5156 & 1.0000 & 0.4844 \\
    320 & 1.0000 & 1.0000 & 0.0000 \\
    \hline
  \end{tabular}
\end{table}

\begin{table}[H]
  \centering
  \caption{Case 2, Example 2: proportion of eigenvalues below \(0.1\);
           target \(\tfrac{17}{18}\approx0.9444\).}
  \label{tab:case2_ex2_zero}
  \begin{tabular}{|c|S|S|S|}
    \hline
    {$n$} & {Prop.\(\le0.1\)} & {Target} & {Error} \\
    \hline
    40  & 0.7667 & 0.9444 & 0.1777 \\
    80  & 0.8833 & 0.9444 & 0.0611 \\
    160 & 0.9438 & 0.9444 & 0.0006 \\
    320 & 0.9448 & 0.9444 & 0.0004 \\
    \hline
  \end{tabular}
\end{table}

\section{Conclusion}\label{sec concl}

We have studied the spectral distribution of the geometric mean matrix-sequence of two $d$-level $r$-block GLT matrix-sequences 
$\{G(A_n, B_n)\}_n$  formed by Hermitian Positive Definite (HPD) matrices, where $\{A_n\}_n, \{B_n)\}_n$ have GLT symbols $\kappa, \xi$, respectively.
In Theorem \ref{theorem 1} and in Theorem \ref{theorem 2} we have shown that the assumption that at least one of the input GLT symbols is invertible a.e. is not necessary, when the symbols commute so that 
\[
\{G(A_n, B_n)\}_n \sim_{\mathrm{GLT}} G(\kappa,\xi)\equiv (\kappa \xi)^{1/2}.
\]
On the other hand, the statement is generally false or even not well posed when the symbols are not invertible a.e. and do not commute, as shown in detail in several numerical experiments. Further numerical experiments are presented and critically commented in connection with extremal spectral features, linear positive operators, and in connection with the notion of Toeplitz and GLT momentary symbols. What the numerics show opens the door to further theoretical studies, which we will consider in the near future.

\end{document}